\newtheorem{theorem}{Theorem}[section]
\newtheorem{proposition}[theorem]{Proposition}
\newtheorem{lemma}[theorem]{Lemma}
\newtheorem{corollary}[theorem]{Corollary}
\newtheorem{definition}[theorem]{Definition}
\newtheorem{fact}[theorem]{Fact}
\theoremstyle{definition}
\newtheorem*{theoremA}{Theorem A}
\newtheorem*{theoremARS}{Theorem ARS}
\newtheorem*{theoremHIS}{Theorem HIS}
\newtheorem*{theoremIR}{Theorem IR}
\newtheorem*{propositionBS}{Proposition BS}
\newcommand\decdot{\raisebox{0.1ex}{\textbf{.}}}
\newcommand{\leftexp}[2]{{\vphantom{#2}}^{#1}{#2}}
\newcommand{\biinf}{{}^{\scriptscriptstyle\bullet}}
\numberwithin{equation}{section}
\title{Rational numbers with purely periodic $\beta$-expansion} 
\subjclass{11A63, 11J72, 11R06, 28A80, 37B50}
\begin{document}

\author[B. Adamczewski]{Boris Adamczewski}
\address{CNRS, Universit\'e de Lyon, Universit\'e Lyon 1 \\ 
Institut Camille Jordan \\ 
43 boulevard du 11 novembre 1918  \\
69622 Villeurbanne Cedex France}
\email{Boris.Adamczewski@math.univ-lyon1.fr}

\author[Ch. Frougny]{Christiane Frougny}
\address{Universit\'e Paris 8 and \\
LIAFA UMR 7089, case 7014 \\
75205 Paris Cedex 13 France}
\email{Christiane.Frougny@liafa.jussieu.fr}

\author[A. Siegel]{Anne Siegel}
\address{CNRS, Universit\'e de Rennes 1 \\ 
UMR 6074 / INRIA IRISA \\
Campus de Beaulieu \\	 
35042 Rennes Cedex France}
\email{anne.siegel@irisa.fr}

\author[W. Steiner]{Wolfgang Steiner}
\address{CNRS, Universit\'e Paris Diderot -- Paris 7 \\
LIAFA UMR 7089, case 7014 \\
75205 Paris Cedex 13 France}
\email{steiner@liafa.jussieu.fr}
\thanks{This work has been supported by the Agence Nationale de la Recherche, grant ANR--06--JCJC--0073 ``DyCoNum''.}

\begin{abstract}   
We study real numbers $\beta$ with the curious property that the $\beta$-expansion of all sufficiently small positive rational numbers is purely periodic.  
It is known that such real numbers have to be Pisot numbers which are units of the number field they generate. 
We complete known results due to Akiyama to characterize algebraic numbers of degree $3$ that enjoy this property.  
This extends results previously obtained in the case of degree $2$ by Schmidt, Hama and Imahashi.  
Let $\gamma(\beta)$ denote the supremum of the real numbers $c$ in $(0,1)$ such that all positive rational numbers less than $c$ have a purely periodic $\beta$-expansion.  
We prove that $\gamma(\beta)$ is irrational for a class of cubic Pisot units that contains the smallest Pisot number $\eta$. 
This result is motivated by the observation of Akiyama and Scheicher that $\gamma(\eta)=0.666\, 666 \, 666 \,086 \cdots$ is surprisingly close to $2/3$.
\end{abstract}

\maketitle

\section{Introduction}

One of the most basic results  about decimal expansions is that every rational number has an eventually periodic expansion\footnote{A sequence $(a_n)_{n\geq 1}$ is eventually periodic if there exists a positive integer $p$ such that 
$a_{n+p} = a_n$ for every positive integer $n$ large enough.}, the converse being obviously true.  
In fact, much more is known for we can easily distinguish rationals with a purely periodic expansion\footnote{A sequence $(a_n)_{n\geq 1}$ is purely periodic if there exists a positive integer $p$ such that $a_{n+p} = a_n$ for every positive integer $n$.}: a rational number $p/q$ in the interval $(0,1)$, in lowest form, has a purely periodic decimal expansion if and only if $q$ and $10$ are relatively prime. 
Thus, both rationals with a purely periodic expansion and rationals with a non-purely periodic expansion are, in some sense, uniformly spread on the unit interval. 
These results extend \emph{mutatis mutandis} to any integer base $b \geq 2$, as explained in the standard monograph of Hardy and Wright~\cite{Hardy&Wright}.

However, if one replaces the integer $b$ by an algebraic number that is not a rational integer, it may happen that the situation would be drastically different. 
As an illustration of this claim, let us consider the following two examples. 
First, let $\varphi$ denote the golden ratio, that is, the positive root of the polynomial $x^2-x-1$. 
Every real number $\xi$ in $(0,1)$ can be uniquely 
expanded as  
$$
\xi = \sum_{n\geq 1} \frac{a_n}{\varphi^n},
$$
where $a_n$ takes only the values $0$ and~$1$, and with the additional condition that $a_n a_{n+1}=0$ for every positive integer~$n$. 
The binary sequence $(a_n)_{n\geq 1}$ is termed the $\varphi$-expansion of~$\xi$. 
In 1980, Schmidt~\cite{Schmidt} proved the intriguing result that every rational number in $(0,1)$ has a purely periodic 
$\varphi$-expansion. 
Such a regularity is somewhat surprising as one may imagine $\varphi$-expansion of rationals more intricate than their decimal expansions. 
Furthermore, the latter property seems to be quite exceptional.    
Let us now consider $\theta=1+\varphi$, the largest root of the polynomial $x^2-3x+1$. 
Again, every real number $\xi$ in $(0,1)$ has a $\theta$-expansion, that is, $\xi$ can be uniquely expanded as  
$$
\xi= \sum_{n\geq 1} \frac{a_n}{\theta^n},
$$
where $a_n$ takes only the values $0$, $1$ and~$2$, (and with some extra conditions we do not care about here). 
In contrast to our first example, it was proved by Hama and Imahashi~\cite{HamaImahashi} that no rational number in $(0,1)$ has a purely periodic $\theta$-expansion. 

\medskip

Both $\varphi$- and $\theta$-expansions mentioned above are typical 
examples of the so-called \emph{$\beta$-expansions} introduced by R\'enyi~\cite{Renyi}.
Let $\beta>1$ be a real number. 
The $\beta$-expansion of a real number $\xi \in [0,1)$ is defined as the sequence $d_{\beta}(\xi) = (a_n)_{n\geq 1}$ over the alphabet 
$\mathcal{A}_{\beta}:=\{0,1,\ldots,\lceil\beta\rceil-1\}$ produced by the $\beta$-transformation $T_{\beta}:\,x\mapsto\beta x\bmod{1}$ with a greedy procedure; that is, such that, for all $i\geq 1$, $a_n=\lfloor\beta T_{\beta}^{n-1}(\xi)\rfloor$. 
The sequence $d_{\beta}(\xi)$ replaces in this framework the classical sequences of decimal and binary digits since we have 
$$
\xi=\sum_{n\geq 1} \frac{a_n}{\beta^n}\,.
$$ 

\medskip

Set 
$$ 
\gamma(\beta) := \sup\{c\in[0,1) \mid \forall\,0 \leq p/q \leq c, \, \,d_{\beta}(p/q)\ \mbox{is a purely periodic sequence}\}.
$$ 
This note is concerned with those real numbers $\beta$ with the property that all sufficiently small rational numbers have a purely periodic $\beta$-expansion, that is, such that 
\begin{equation}\label{gt0}
\gamma(\beta) > 0.
\end{equation} 
With this definition, we get that $\gamma(\varphi)=1$, while $\gamma(\theta)=0$. 
As one could expect, Condition~(\ref{gt0}) turns out to be very restrictive.  
We deduce from works of Akiyama~\cite{AKI2} and Schmidt~\cite{Schmidt80} (see details in Proposition~\ref{justify}) that such real numbers $\beta$ have to be Pisot units. 
This means that $\beta$ is both a Pisot number and a unit of the integer ring of the number field it generates. 
Recall that a Pisot number is a real algebraic integer which is greater than~$1$ and whose Galois conjugates (different from itself) are all inside the open unit disc. 

One relevant property for our study is
$$
{\rm (F)}: \quad \mbox{every $x\in\mathbb{Z}[1/\beta]\cap[0,1)$ has a finite $\beta$-expansion.}
$$ 
This property was introduced by Frougny and Solomyak in~\cite{FrougnySolomyak92}. 
It then has been studied for various reasons during the twenty last years.  
In particular, Akiyama~\cite{AKI1} proved the following unexpected result.

\begin{theoremA}
\emph{If $\beta$ is a Pisot unit satisfying {\rm (F)}, then $\gamma(\beta)>0$.}
\end{theoremA}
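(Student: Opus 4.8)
The plan is to deduce Theorem~A from two results in the geometric theory of Pisot numeration systems: a reformulation of Property~(F) due to Akiyama, and the characterization of purely periodic $\beta$-expansions in terms of a natural extension domain due to Ito and Rao (a version of which appears below as Theorem~IR). Since $\beta$ is a Pisot unit of degree $d$, one attaches to the Galois conjugates $\beta_2,\dots,\beta_d$ of $\beta$, all lying inside the open unit disc, a Euclidean representation space $\mathbb K_\beta$ together with the diagonal Galois embedding $\Phi\colon\mathbb Q(\beta)\to\mathbb K_\beta$, and one forms the natural extension domain $\widehat{\mathcal D}_\beta\subseteq[0,1)\times\mathbb K_\beta$ of the $\beta$-transformation; it is a bounded set that decomposes as a finite union $\bigcup_k[0,\ell_k)\times\mathcal T_\beta(k)$, where the $\mathcal T_\beta(k)\subseteq\mathbb K_\beta$ are the graph-directed self-affine sub-tiles of the central tile attached to the finite automaton recognizing $\beta$-admissible words. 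Akiyama's result states that Property~(F) holds if and only if $\mathbf 0$ is an inner point of the central tile $\mathcal T_\beta(0)$; the theorem of Ito and Rao states that, for $x\in\mathbb Q(\beta)\cap[0,1)$, the expansion $d_\beta(x)$ is purely periodic if and only if $(x,\Phi(x))\in\widehat{\mathcal D}_\beta$. The elementary remark that makes everything work is that a \emph{rational} number $r$ satisfies $\Phi(r)=(r,\dots,r)$, so $(r,\Phi(r))\to(0,\mathbf 0)$ in $[0,1)\times\mathbb K_\beta$ as $r\to 0^+$.

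Granting these facts, the deduction is short. Assume $\beta$ is a Pisot unit satisfying~(F). By Akiyama's theorem, $\mathbf 0$ is an inner point of $\mathcal T_\beta(0)$; unwinding the definition of $\widehat{\mathcal D}_\beta$ (and using $\ell_0>0$), there are $\delta>0$ and an open neighbourhood $V$ of $\mathbf 0$ in $\mathbb K_\beta$ with $(0,\delta)\times V\subseteq\widehat{\mathcal D}_\beta$. By the remark above, one can choose $\varepsilon\in(0,1)$ so that $(r,\Phi(r))\in(0,\delta)\times V\subseteq\widehat{\mathcal D}_\beta$ for every rational $r\in(0,\varepsilon)$, and then the theorem of Ito and Rao shows that $d_\beta(r)$ is purely periodic; since this also holds trivially at $r=0$, we conclude that $\gamma(\beta)\geq\varepsilon>0$.

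The work therefore concentrates in Akiyama's reformulation of~(F), and I expect the implication (F) $\Rightarrow$ $\mathbf 0\in\mathrm{int}\,\mathcal T_\beta(0)$ --- the only one needed here --- to be the main obstacle. Its easy half is the following observation: if $x\in\mathbb Z[1/\beta]\cap[0,1)$ has a \emph{finite} $\beta$-expansion $a_1\cdots a_n$, then the bi-infinite word with all-zero past, digits $a_1\cdots a_n$, and all-zero future is $\beta$-admissible (its nonzero tails are exactly the shifts of $d_\beta(x)$), so $(x,\mathbf 0)\in\widehat{\mathcal D}_\beta$; under~(F) this holds for all $x$ in the dense set $\mathbb Z[1/\beta]\cap[0,1)$, and since $\widehat{\mathcal D}_\beta$ is closed we obtain $[0,1)\times\{\mathbf 0\}\subseteq\widehat{\mathcal D}_\beta$, hence $\mathbf 0\in\mathcal T_\beta(k)$ for every~$k$. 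The hard part is to upgrade this membership to an \emph{interior} membership in $\mathcal T_\beta(0)$: one uses that $\widehat T_\beta$ is a measure-preserving bijection of $\widehat{\mathcal D}_\beta$ off a null set, that the sub-tiles have positive Haar measure with pairwise overlaps of measure zero, and that the contractions associated with the all-zero loop of the automaton sweep a full neighbourhood of $\mathbf 0$; a contradiction argument (were $\mathbf 0$ on the boundary of $\mathcal T_\beta(0)$, some element of $\mathbb Z[1/\beta]\cap[0,1)$ would be forced to have an infinite expansion, against~(F)) then finishes it. Making the sweeping and the measure bookkeeping rigorous, and controlling the boundary overlaps, is precisely the technical content of Akiyama's work on self-affine Pisot tilings~\cite{AKI1}, which I would cite rather than reprove.
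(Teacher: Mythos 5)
The paper does not actually prove this statement: Theorem~A is quoted from Akiyama's work \cite{AKI1}, so there is no internal proof to compare yours against. Judged on its own terms, your deduction from the two black boxes is sound and is essentially the ``modern'' route: granting (a) the Ito--Rao characterization (Theorem~IR of the paper; the discrepancy between the paper's $-\Xi(x)$ and your $\Phi(x)$ is immaterial for rationals tending to $0$, since both images tend to $\mathbf 0$) and (b) Akiyama's reformulation of (F) as the origin being an inner point of the central tile, the conclusion follows as you say. One small simplification: you do not need $\mathbf 0$ to be interior to the particular subtile $\mathcal T_0$; since under (F) all the heights $T_\beta^i(1)$, $0\le i<m+n$, are positive, a neighbourhood of $\mathbf 0$ inside the full central tile $\mathcal T=\bigcup_i\mathcal T_i$ already places $(-\Xi(r),r)$ in $\mathcal E_\beta$ for every rational $r$ smaller than $\min_i T_\beta^i(1)$.

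The genuine gap is that the entire difficulty of Theorem~A has been relocated into input (b), which you do not prove. Your ``easy half'' only establishes that the pair $(x,\mathbf 0)$ lies in the natural extension domain, and this holds for every Pisot unit with or without (F), because an all-zero past prepended to any admissible $d_\beta(x)$ is automatically admissible; so that step does not reveal where (F) enters at all. Your sketch of the ``hard part'' (invertibility a.e.\ of the natural extension, positive-measure subtiles, a sweeping argument near the all-zero loop) is too vague to verify and is, in substance, exactly the tiling-theoretic form of the theorem you set out to prove; citing \cite{AKI1} for it is therefore close to circular, since you reduce Akiyama's theorem to another theorem of Akiyama of comparable depth from the same body of work. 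If the goal is an independent proof, the statement that must be established from scratch is that (F) forces $\mathbf 0$ into the interior of $\mathcal T$ --- equivalently, that $\mathbf 0$ is an exclusive point of the covering (\ref{eq:covering}) --- and everything else in your write-up is routine bookkeeping around it.
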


The fact that (F) plays a crucial role in the study of $\gamma(\beta)$ looks somewhat puzzling but it will become more transparent in the sequel.

\medskip

The results of Hama, Imahashi and Schmidt previously mentioned about $\varphi$- and $\theta$-expansions are actually more general and lead, using Proposition~\ref{justify}, to a complete understanding of $\gamma(\beta)$ when $\beta$ is a quadratic number. 

\begin{theoremHIS}
\emph{Let $\beta > 1$ be a quadratic number.
Then, $\gamma(\beta) > 0$ if and only if $\beta$ is a Pisot unit satisfying {\rm (F)}.
In that case, $\gamma(\beta) = 1$.}
\end{theoremHIS}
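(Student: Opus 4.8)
The plan is to derive both the equivalence and the value $\gamma(\beta)=1$ from the classical description of the quadratic Pisot numbers, together with Proposition~\ref{justify}, the Frougny--Solomyak characterization of property~(F), and the theorems of Schmidt and Hama--Imahashi. For the direction ``$\gamma(\beta)>0 \Rightarrow \beta$ is a Pisot unit satisfying~(F)'', I start from Proposition~\ref{justify}, which already forces $\beta$ to be a Pisot unit. Now recall that a quadratic Pisot number is a root of either $X^2-aX-b$ with integers $1\le b\le a$, or $X^2-aX+b$ with integers $1\le b\le a-2$ (so $a\ge 3$); since the product of the two roots equals $-b$, resp.\ $b$, the unit condition is exactly $b=1$. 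Hence $\beta$ is a root of $X^2-aX-1$ for some $a\ge 1$, or of $X^2-aX+1$ for some $a\ge 3$.

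By the Frougny--Solomyak analysis of the quadratic case \cite{FrougnySolomyak92}, every root of $X^2-aX-1$ satisfies~(F) (indeed $d_\beta(1)=a\,1$ is finite), whereas no root of $X^2-aX+1$ does --- a direct computation gives $d_\beta(1)=(a-1)\,\overline{(a-2)}$, an infinite word, which is incompatible with~(F). So it remains only to discard the second family, and here one invokes the theorem of Hama and Imahashi \cite{HamaImahashi} in its general form: for $\beta$ a root of $X^2-aX+1$ no rational in $(0,1)$ has a purely periodic $\beta$-expansion, i.e.\ $\gamma(\beta)=0$, contradicting $\gamma(\beta)>0$. Therefore $\beta$ is a root of $X^2-aX-1$, which is a Pisot unit satisfying~(F); this proves one implication, and makes ``quadratic Pisot unit satisfying~(F)'' synonymous with ``root of $X^2-aX-1$, $a\ge 1$''.

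For the converse it then suffices to prove that $\gamma(\beta)=1$ for every root $\beta$ of $X^2-aX-1$, $a\ge 1$; since $\gamma(\beta)\le 1$ always, this means showing that \emph{every} rational in $(0,1)$ has a purely periodic $\beta$-expansion. For $a=1$ this is Schmidt's theorem on $\varphi$ \cite{Schmidt}, and the general case is precisely the content of the more general forms of the results of Schmidt and Hama--Imahashi. If one wants a self-contained argument, I would work through the natural extension of $T_\beta$: every rational $p/q\in(0,1)$ has an eventually periodic $\beta$-expansion, and pure periodicity is equivalent to the point $p/q$ --- read simultaneously at the real place, at the conjugate place (where, $p/q$ being rational, it again takes the value $p/q$), and at the finite places dividing~$q$ --- belonging to the natural extension domain $\widehat{X}_\beta$. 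As $\beta$ is quadratic with conjugate $-1/\beta\in(-1,0)$, the conjugate coordinate of $\widehat{X}_\beta$ is an explicit bounded interval that one checks contains $(0,1)$; and because $\beta$ satisfies~(F), the finite-place coordinates are automatically admissible, so no extra obstruction appears. Hence all of $(0,1)$ is purely periodic and $\gamma(\beta)=1$.

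The step I expect to be the real obstacle is this last one --- upgrading $\gamma(\beta)>0$ (already granted by Theorem~A for these $\beta$) to the sharp value $\gamma(\beta)=1$. Theorem~A is soft: it yields a positive threshold but no control on its size, while identifying the threshold requires understanding the geometry of $\widehat{X}_\beta$, and specifically that its slice along the diagonal --- the locus where rational numbers sit --- sweeps out the whole interval $(0,1)$. In degree two this amounts to an elementary interval computation, which is why the answer comes out to the clean value~$1$; it is precisely this point that becomes genuinely delicate in higher degree, where the diagonal slice of the relevant self-affine domain need not be an interval of the form $(0,c)$ with $c$ rational, as the remainder of the paper demonstrates.
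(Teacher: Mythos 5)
Your proposal is correct and follows essentially the same route as the paper, which does not reprove Theorem~HIS but derives it from Proposition~\ref{justify} together with the general forms of the theorems of Schmidt and of Hama and Imahashi; your reduction via the classification of quadratic Pisot units (roots of $X^2-aX-1$ versus $X^2-aX+1$) and the computation of $d_\beta(1)$ in each family is exactly the intended argument. The supplementary natural-extension sketch is consistent with the Ito--Rao framework the paper uses later, though it is not needed once Schmidt's general theorem is invoked.
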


Furthermore, quadratic Pisot units satisfying (F) have been characterized in a simple way: they correspond to positive roots of polynomials $x^2-n x-1$, $n$ running along the positive integers. 
These exactly correspond to quadratic Pisot units whose Galois conjugate is negative.

\medskip
First, we establish the converse of Theorem~A for algebraic numbers of degree~3.
This provides a result similar to Theorem~HIS in that case.

\begin{theorem}\label{th1}
Let $\beta > 1$ be a cubic number.
Then, $\gamma(\beta) > 0$ if and only if $\beta$ is a Pisot unit satisfying {\rm (F)}.
\end{theorem}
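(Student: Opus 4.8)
The plan is to establish only the implication that Theorem~A leaves open: a cubic number $\beta$ with $\gamma(\beta)>0$ must be a Pisot unit satisfying {\rm (F)}. By Proposition~\ref{justify}, the sole hypothesis $\gamma(\beta)>0$ already forces $\beta$ to be a Pisot number and a unit of $\mathbb{Q}(\beta)$, so the real content is that a \emph{cubic} Pisot unit with $\gamma(\beta)>0$ automatically enjoys {\rm (F)}. I would argue this by contraposition: starting from a cubic Pisot unit $\beta$ that fails {\rm (F)}, I would produce a sequence of positive rationals tending to~$0$ whose $\beta$-expansions are not purely periodic, so that $\gamma(\beta)=0$.

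The main tool is the description of rationals with purely periodic $\beta$-expansion through the natural extension of the $\beta$-transformation, which is available for Pisot units: $p/q\in(0,1)$ has a purely periodic expansion if and only if the point of $[0,1)\times\mathbb{R}^2$ (together with the $v$-adic components at the primes $v$ dividing $q$) recording $p/q$ itself and its two Galois conjugates in the contracting space lies in the closure of the natural-extension domain $\mathcal{D}_\beta$. Letting $p/q\to 0^{+}$ over rationals of growing height, the corresponding points fill up, near the first coordinate~$0$, a prescribed adelic neighbourhood of the origin; hence $\gamma(\beta)>0$ forces $\overline{\mathcal{D}_\beta}$ to contain such a neighbourhood, and in particular forces the origin $\mathbf{0}$ to lie in the interior of the archimedean fiber of $\mathcal{D}_\beta$ above~$0$ (the Rauzy fractal). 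This is exactly the geometric shadow of {\rm (F)}. I would then invoke Akiyama's explicit classification of cubic Pisot units, which partitions the minimal polynomials $x^{3}-a_1x^{2}-a_2x-a_3$ with $a_3=\pm1$ into finitely many families and singles out those satisfying {\rm (F)}; for each remaining family I would work out the self-affine structure of $\mathcal{D}_\beta$ (whose contracting space is $\mathbb{C}$ or $\mathbb{R}^2$ according to the conjugates of $\beta$) and check that the origin sits on the boundary of its fiber above~$0$, so that a suitably chosen sequence of small positive rationals escapes $\overline{\mathcal{D}_\beta}$.

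The step I expect to be the main obstacle is precisely this case analysis over the cubic Pisot units failing {\rm (F)}: locating the boundary of $\mathcal{D}_\beta$ near the origin in each family, and, more delicately, converting a single ``bad'' point at the origin into an explicit family of rationals $p_n/q_n\to 0$ with $T_\beta^{k}(p_n/q_n)\neq p_n/q_n$ for all $k\geq1$, which requires simultaneous control of the archimedean and of the $v$-adic coordinates ($v\mid q_n$) of $p_n/q_n$. A related technical point is that, since a rational $p/q$ generally does not lie in $\mathbb{Z}[1/\beta]$, the orbit of $p/q$ under $T_\beta$ lives in the finer lattice $\frac{1}{q}\mathbb{Z}[\beta]$ and the finite places dividing $q$ genuinely intervene in the pure-periodicity criterion; it is this phenomenon that accounts for $\gamma(\beta)$ being strictly smaller than~$1$ in general (as already for the smallest Pisot number) and that must be handled with care when passing from the natural-extension characterization for $\mathbb{Z}[1/\beta]$-arguments to the one for arbitrary rational arguments.
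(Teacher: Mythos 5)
Your reduction is right as far as it goes: Proposition~\ref{justify} disposes of the non-Pisot-unit case, Theorem~A gives the direction (F)~$\Rightarrow\gamma(\beta)>0$, and the remaining content is that a cubic Pisot unit failing (F) has $\gamma(\beta)=0$, to be read off from the Ito--Rao natural-extension criterion together with Akiyama's classification. This is indeed the paper's skeleton. But the pivotal step is missing. You claim that $\gamma(\beta)>0$ forces the origin into the \emph{interior} of the fiber above $0$ (the central tile), because the points attached to rationals $p/q\to 0^+$ ``fill up a prescribed adelic neighbourhood of the origin''. For a Pisot \emph{unit} the criterion is purely archimedean (Theorem~IR involves no finite places; this is precisely what the unit hypothesis buys, and your ``related technical point'' about places $v\mid q$ genuinely intervening is incorrect here --- it is the non-unit case that requires them). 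Consequently the relevant points are $-\Xi(p/q)=(-p/q,\dots,-p/q)$, which lie on a single ray through the origin. Hence $\gamma(\beta)>0$ only forces a \emph{segment} of that ray to lie in $\mathcal{T}$, which is perfectly compatible with $0\in\partial\mathcal{T}$. So knowing that the origin sits on the boundary of the tile for each family failing (F) does not, by itself, produce rationals $p_n/q_n\to 0$ with non-purely-periodic expansion; you have correctly flagged this as ``the main obstacle'' but you have no mechanism to overcome it.

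The paper's mechanism is the notion of \emph{spiral point}: for a cubic Pisot unit with complex conjugate $\alpha=\rho e^{2\pi i\phi}$, Proposition~\ref{prop:spiral} shows that every point of $\mathbb{Q}(\alpha)\cap\partial\mathcal{T}$ is a spiral point, i.e.\ both the interior and the complement of $\mathcal{T}$ meet every ray issued from it. The proof rests on the eventual periodicity of left-infinite $\Xi(\beta)$-representations (Lemma~\ref{lemper}) and, crucially, on the irrationality of $\phi$ (Lemma~\ref{lem:M}, via Mignotte), which makes the rotated copies $z_k+\alpha^{kp}\mathcal{B}'$ sweep out all directions. Applied at the origin, this puts points of the complement of $\mathcal{T}$ on the ray of rationals arbitrarily close to $0$, whence $\gamma(\beta)=0$ by Theorem~IR. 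Note also that this argument is intrinsically two-dimensional-rotational: when $\beta$ fails (F) because it has a positive real Galois conjugate, the spiral argument is unavailable and the paper uses instead the classical Galois-conjugation argument (as in the Salem case of Proposition~\ref{justify}) to get $\gamma(\beta)=0$. Your proposal does not separate these two subcases, and without the spiral-point idea (or an equivalent quantitative statement about how $\partial\mathcal{T}$ crosses the diagonal ray) the contrapositive does not close.
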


We recall that Pisot units of degree~$3$ satisfying (F) have been nicely characterized in~\cite{Akiyama00}:  they correspond to 
the largest real roots of polynomials $x^3-ax^2-bx-1$, with $a,b$ integers, $a\geq 1$ and $-1\leq b \leq a+1$. 

\medskip

It is tempting to ask whether, in Theorem~\ref{th1}, Property (F) would imply that $\gamma(\beta)=1$ as it is the case for quadratic Pisot units.  
However, Akiyama \cite{AKI1} proved that such a result does not hold. 
Indeed, he obtained that the smallest Pisot number $\eta$, which is the real root of the polynomial $x^3-x-1$, satisfies $0< \gamma(\eta)<1$. 
More precisely, it was proved in \cite{AkiyamaSchei} that $\gamma(\eta)$ is abnormally close to the rational number $2/3$ since one has 
$$
\gamma(\eta) = 0.666 \, 666 \, 666 \, 086  \cdots.
$$
This intriguing phenomenon naturally leads to ask about the arithmetic nature of $\gamma(\eta)$.

\medskip

In this direction, we will prove that $\gamma(\eta)$ is irrational as a particular instance of the following result.  

\begin{theorem}\label{th2}
Let $\beta$ be a cubic Pisot unit satisfying {\rm (F)} and such that the number field $\mathbb{Q}(\beta)$ is not totally real.  
Then, $\gamma(\beta)$ is irrational. 
In particular, $0<\gamma(\beta)<1$.
\end{theorem}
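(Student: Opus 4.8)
The plan is to translate the arithmetic definition of $\gamma(\beta)$ into a statement about the geometry of the natural extension of $T_\beta$, following Ito--Rao and Akiyama, and then to exploit the Galois action. Since $\beta$ is a cubic Pisot unit and $\mathbb{Q}(\beta)$ is not totally real, $\beta$ has a pair of complex conjugate Galois conjugates $\beta',\overline{\beta'}$ of modulus $<1$; and since (F) holds there is a compact ``central tile'' $\mathcal{R}\subset\mathbb{R}\times\mathbb{C}$, realised as the attractor of a graph-directed system whose maps are of the form $z\mapsto\beta' z+\mathrm{const}$, with the property (Theorem~IR) that for $x\in\mathbb{Q}(\beta)\cap[0,1)$ the expansion $d_\beta(x)$ is purely periodic if and only if $(x,x')\in\mathcal{R}$, where $x'\in\mathbb{C}$ denotes the image of $x$ under a fixed complex embedding (up to the sign convention adopted for $\mathcal{R}$). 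A rational number $r$ is fixed by the Galois action, so $r'=r$ lies on the real axis; hence a rational $r\in[0,1)$ has a purely periodic $\beta$-expansion precisely when $(r,r)$ lies in $\mathcal{R}\cap\ell$, where $\ell=\{(t,t):t\in\mathbb{R}\}$ is the ``real diagonal'' line. As $0$ is good and $\mathcal{R}$ is closed, density of the rationals forces the whole segment $\{(t,t):0\le t\le\gamma(\beta)\}$ into $\mathcal{R}$; and, by the very definition of the supremum, there are ``bad'' rationals --- whose diagonal points lie outside $\mathcal{R}$ --- accumulating at $(\gamma(\beta),\gamma(\beta))$ from above. Thus $(\gamma(\beta),\gamma(\beta))$ is the point where $\ell$ leaves $\mathcal{R}$, and in particular it lies on $\partial\mathcal{R}$.

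The second step is to describe this exit point explicitly. Using the characterisation recalled above of cubic Pisot units satisfying (F) as the dominant roots of $x^3-ax^2-bx-1$ with $a\ge1$, $-1\le b\le a+1$, together with the combinatorial constraints imposed by (F), one analyses the graph-directed structure of $\mathcal{R}$ and of its boundary near $\ell$. The expected outcome is that $\ell$ meets $\partial\mathcal{R}$ exactly once, at a point of first coordinate strictly less than $1$ (so that $\gamma(\beta)<1$) whose bi-infinite coding is eventually periodic in both directions; this yields, on the one hand, a representation $\gamma(\beta)=\sum_{n\ge1}c_n\beta^{-n}$ with $(c_n)_{n\ge1}$ explicit and eventually --- but not purely --- periodic, and, on the other hand, that the conjugate coordinate equals a convergent integer power series $S=\sum_{k\ge0}e_k(\beta')^k$ with $(e_k)_{k\ge0}$ eventually periodic, so that $S=A(\beta')+(\beta')^{\ell}B(\beta')/\big(1-(\beta')^{p}\big)$ for some $A,B\in\mathbb{Z}[X]$ and integers $\ell\ge0$, $p\ge1$. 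I expect this to be the main obstacle: showing that $\ell$ exits $\mathcal{R}$ only once and that the coding of the exit point is eventually periodic requires controlling a fractal boundary finely enough that a finite case analysis over the family seems unavoidable, in the spirit of Akiyama's proof of Theorem~A and of the computation of $\gamma(\eta)$ by Akiyama and Scheicher.

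Finally, suppose for contradiction that $\gamma(\beta)=p/q\in\mathbb{Q}$. Then $\gamma(\beta)\in\mathbb{Q}(\beta)$ and all three of its Galois conjugates are equal to $p/q$; in particular the conjugate coordinate of the boundary point $(\gamma(\beta),\gamma(\beta))$ is the \emph{real} number $p/q$, that is, $S=p/q$. Applying the automorphism $\sigma\colon\beta'\mapsto\beta$ of $\mathbb{Q}(\beta')=\mathbb{Q}(\beta)$ to the identity $S=p/q$ gives $\sigma(S)=A(\beta)+\beta^{\ell}B(\beta)/\big(1-\beta^{p}\big)=p/q$ as well. Comparing these two equalities produces a contradiction: for instance, the ``periodic tail'' $(\beta')^{\ell}B(\beta')/(1-(\beta')^{p})$ of $S$ is controlled in size by $|\beta'|^{\ell}<1$, whereas on the $\beta$-side the analogous term cannot be matched to the same small rational $p/q$ after adjusting by the polynomial part; equivalently, since $\beta'\notin\mathbb{R}$, the imaginary part of $S$ is a non-trivial combination of the numbers $\mathrm{Im}\big((\beta')^k\big)$, which the explicit form of $(e_k)$ forces to be nonzero. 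This last point is exactly where the hypothesis that $\mathbb{Q}(\beta)$ is not totally real is used: it guarantees $\beta'\notin\mathbb{R}$, so that a nonconstant integer power series in $\beta'$ taking a rational value is a genuine obstruction (in the totally real case no such obstruction exists, consistently with the statement). Hence $\gamma(\beta)$ is irrational; the bounds $0<\gamma(\beta)<1$ follow, the lower one from Theorem~A and the upper one because $1$ is rational.
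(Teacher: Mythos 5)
Your first step is essentially sound and parallels Corollary~\ref{cor:boundary} of the paper, with one caveat: the set in Theorem~IR is $\bigcup_i\mathcal{T}_i\times[0,T_\beta^i(1))$, so the diagonal can also ``exit'' at a point that is interior to $\mathcal{T}$ but lies on the common boundary of two subtiles $\mathcal{T}_i\cap\mathcal{T}_j$; this case must be treated separately and your claim that the exit point necessarily lies on $\partial\mathcal{T}$ is not quite forced. The real problems are in your second and third steps. Step two --- showing the diagonal meets the boundary exactly once, locating the exit point explicitly, and proving its bi-infinite coding is eventually periodic by a finite case analysis over the whole family $x^3-ax^2-bx-1$ --- is not an argument but a restatement of the difficulty, as you acknowledge; no such computation appears in the paper and none is needed. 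Step three is worse: the proposed obstruction, namely that a non-constant eventually periodic integer power series in the complex conjugate $\beta'$ cannot take a rational value, is simply false. Lemma~\ref{lem:minus1} exhibits exactly such an identity: the left infinite periodic word $\leftexp{\omega}{(t_1\cdots t_{n-1}(t_n-1))}\cdots$ gives $-1=\sum_{k\ge0}e_k\Xi(\beta^k)$ with $(e_k)$ eventually periodic, so the rational number $-1$ is a value of such a series. Size comparisons between the $\beta'$-side and the $\beta$-side also prove nothing: once the series is summed into a rational function of $\beta'$, Galois conjugation is applied to an exact algebraic identity that carries no metric content.

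The idea you are missing is the paper's notion of a \emph{spiral point}. Proposition~\ref{prop:spiral} shows that every point of $\mathbb{Q}(\alpha)$ lying on the boundary of $\mathcal{T}$, or of a subtile $\mathcal{T}_j$, is a spiral point: every ray issued from it meets both the interior and the complement of the tile. This rests on the eventual periodicity of $\Xi(\beta)$-representations of points of $\mathbb{Q}(\alpha)$ (Lemma~\ref{lemper}) together with the irrationality of the argument $\phi$ of $\alpha=\rho e^{2\pi i\phi}$ (Lemma~\ref{lem:M}, via Mignotte), which makes the translated balls $z_k+\alpha^{kp}\mathcal{B}'$ rotate densely around the point. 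Granting this, the proof is short: if $\gamma(\beta)$ were rational, then $\Xi(-\gamma(\beta))=-\gamma(\beta)$ would be a point of $\mathbb{Q}(\alpha)$ on $\partial\mathcal{T}$ or on $\partial\mathcal{T}_j$ (the degenerate case $\gamma(\beta)=T_\beta^i(1)$ being excluded by Lemmas~\ref{lem:Ti1} and~\ref{lem:minus1}), hence a spiral point; the ray along the real axis toward $0$ then contains interior points of a competing tile, which by Theorem~ARS and Proposition~BS yield rational numbers $r<\gamma(\beta)$ whose $\beta$-expansion is not purely periodic, contradicting the definition of $\gamma(\beta)$. This is where the non-totally-real hypothesis actually enters --- not through a power-series obstruction, but through the irrational rotation that produces the spiralling.
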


Note that in Theorem~\ref{th2} the condition that $\beta$ does not generate a totally real number field is equivalent to the fact that the Galois conjugates of $\beta$ are complex (that is, they belong to $\mathbb{C}\setminus \mathbb{R}$). 
In all this note, a complex Galois conjugate of an algebraic number $\beta$ denotes a Galois conjugate that belongs to $\mathbb{C}\setminus \mathbb{R}$.

\medskip
 
The proofs of our results rely on some topological properties of the tiles of the so-called Thurston tilings associated with Pisot units. 
We introduce the notion of spiral points for compact subsets of $\mathbb{C}$ that turns out to be crucial for our study. 
The fact that $\gamma(\beta)$ vanishes for a cubic Pisot unit that does not satisfy (F) is a consequence of the fact that the origin is a spiral point with respect to the central tile of the underlying tiling. 
Theorem~\ref{th2} comes from the fact that $\gamma(\beta)$ cannot be a spiral point with respect to this tile, which provides a quite unusual proof of irrationality.  


\section{Expansions in a non-integer base}

In this section, we recall some classical results and notation about $\beta$-expansions. 
We first  explain why we will focus on bases $\beta$ that are Pisot units.

\begin{proposition}\label{justify}
Let $\beta>1$ be a real number that is not a Pisot unit. 
Then, $\gamma(\beta)=0$.
\end{proposition}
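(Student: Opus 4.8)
The plan is to show that if $\gamma(\beta) > 0$, then $\beta$ must be a Pisot unit; this is the contrapositive of the statement. The argument splits naturally into two parts: first, $\gamma(\beta) > 0$ forces $\beta$ to be a Pisot number, and second, it forces $\beta$ to be a unit of $\mathbb{Z}[\beta]$. For the first part, I would invoke the work of Schmidt~\cite{Schmidt80}. The key observation is that if a positive proportion of rationals near $0$ have purely periodic $\beta$-expansions, then in particular infinitely many rationals in $(0,\gamma(\beta))$ have eventually periodic (indeed periodic) $\beta$-expansions. Schmidt proved that if every rational in $(0,1)$ has an eventually periodic $\beta$-expansion, then $\beta$ is either a Pisot or a Salem number; conversely, if $\beta$ is Pisot then all elements of $\mathbb{Q}(\beta) \cap [0,1)$ have eventually periodic expansions. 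The delicate point is that we only have periodicity for \emph{small} rationals, not all of them, so I would need the sharper statement that having even one rational $p/q$ with $q$ arbitrarily large (in lowest terms) and a purely periodic expansion already constrains $\beta$ — periodicity of $d_\beta(p/q)$ means $p/q$ is a fixed point of some power $T_\beta^k$, hence a root of a specific integer polynomial whose coefficients involve powers of $\beta$, and controlling these as $q \to \infty$ pins down the algebraic nature of $\beta$. Ruling out Salem numbers uses that $\gamma(\beta)>0$ gives genuinely many such rationals.

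For the second part — that $\beta$ must be a unit — I would argue as follows. Suppose $\beta$ is a Pisot number but not a unit, so the norm $N(\beta)$ of $\beta$ (equivalently, the constant term of its minimal polynomial) has absolute value $\geq 2$; let $p$ be a prime dividing $N(\beta)$. The idea is that purely periodic expansions impose a strong integrality constraint: if $d_\beta(x) = (a_1 a_2 \cdots a_m)^\omega$ is purely periodic with period $m$, then $x = \sum_{n=1}^{m} a_n \beta^{m-n} / (\beta^m - 1)$, so $x(\beta^m-1) \in \mathbb{Z}[\beta]$, and moreover $x \in \mathbb{Z}[\beta^{-1}]$ — in fact $x$ lies in the $\beta$-integers localized appropriately. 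Concretely, a purely periodic point $x$ of $T_\beta$ must have the property that its "denominator" is invertible in a suitable localization of $\mathbb{Z}[\beta]$. Akiyama~\cite{AKI2} established precisely this kind of obstruction: if $\beta$ is a Pisot non-unit, then rationals $p/q$ with $q$ divisible by a prime dividing $N(\beta)$ cannot have purely periodic expansions, and such $q$ occur with positive density among all denominators, including among arbitrarily small rationals $p/q$. Hence no interval $(0,c)$ with $c>0$ can consist entirely of rationals with purely periodic expansions, giving $\gamma(\beta) = 0$.

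The main obstacle I anticipate is making the "arbitrarily small rationals with bad denominators" argument rigorous and self-contained. It is intuitively clear that for any $c > 0$ there exist rationals $p/q \in (0,c)$ in lowest terms with $p$ (or $q$) divisible by a fixed prime $p_0 \mid N(\beta)$ — e.g. take $q = p_0^k$ large and $p$ coprime to $p_0$ with $p/q < c$ — but one must verify that for these the $\beta$-expansion genuinely fails to be purely periodic, which requires the precise denominator-obstruction from Akiyama rather than a soft counting argument. Since the proposition is explicitly attributed in the text to "works of Akiyama~\cite{AKI2} and Schmidt~\cite{Schmidt80}", the cleanest route is to cite these two results as black boxes: Schmidt's theorem supplies the Pisot-or-Salem dichotomy and the exclusion of Salem numbers when $\gamma(\beta)>0$, and Akiyama's theorem supplies the unit condition via the norm obstruction. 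I would structure the proof as: (1) reduce to showing $\beta$ Pisot and $\beta$ a unit; (2) apply Schmidt for the Pisot part; (3) apply Akiyama's norm-divisibility obstruction together with the elementary density observation above for the unit part; (4) conclude $\gamma(\beta)=0$ in the contrapositive. The bulk of the genuinely new content of the paper lies elsewhere (Theorems~\ref{th1} and~\ref{th2}), so here a careful assembly of the cited ingredients is the appropriate level of detail.
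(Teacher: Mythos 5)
Your skeleton (show $\beta$ is an algebraic integer, invoke Schmidt to get the Pisot-or-Salem dichotomy, get the unit condition from a norm obstruction applied to rationals near $0$) is close to the paper's, and your unit argument is essentially the paper's: the paper shows $\beta$ is an algebraic integer by applying the purely periodic representation $r=(a_1\beta^{p-1}+\cdots+a_p)/(\beta^p-1)$, i.e.\ relation~(\ref{eq:1}), to $r=1/n$ for all large $n$, and then says a ``similar argument'' (Akiyama's Proposition~6 in~\cite{AKI1}) gives the unit property; your prime-dividing-$N(\beta)$ obstruction, applied to $1/\ell^k$, is that argument. However, in the Pisot part you cite the wrong result of Schmidt: the statement ``all rationals eventually periodic $\Rightarrow$ Pisot or Salem'' does not apply here (as you yourself note), and your proposed repair --- that a purely periodic rational makes $\beta$ a root of a monic integer polynomial --- only re-proves algebraic integrality, not the dichotomy. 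The statement actually needed, and the one the paper uses, is Schmidt's theorem that for an algebraic integer $\beta$ that is neither Pisot nor Salem the set of rationals with \emph{purely periodic} $\beta$-expansion is nowhere dense in $(0,1)$; this contradicts $\gamma(\beta)>0$ outright, since then all rationals in $(0,\gamma(\beta))$ are purely periodic.

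The more serious gap is the Salem case: nothing in your proposal excludes it. Schmidt's theorem does not, and your norm obstruction cannot, because Salem numbers are units (their minimal polynomials are reciprocal, hence have constant term $\pm1$), so the ``bad denominator'' argument produces no bad rationals at all; ``$\gamma(\beta)>0$ gives genuinely many such rationals'' is not a mechanism. The paper's argument (after~\cite{AKI1}, Proposition~5) is short but specific: since the minimal polynomial of a Salem number is reciprocal, $1/\beta\in(0,1)$ is a Galois conjugate of $\beta$; applying this conjugation to $r=(a_1\beta^{p-1}+\cdots+a_p)/(\beta^p-1)$ replaces $\beta$ by $1/\beta$, making the numerator non-negative and the denominator $\beta^{-p}-1$ negative, so the right-hand side is non-positive while $r>0$. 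Hence no positive rational has a purely periodic expansion and $\gamma(\beta)=0$, contradicting the assumption. Without this step (or an equivalent one) your proof is incomplete, since after Schmidt's nowhere-density theorem the Salem alternative is still open.
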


\begin{proof}
Assume that $\gamma(\beta)>0$. 
If the $\beta$-expansion of a rational number $r\in(0,1)$ has period~$p$, the following relation holds:
\begin{equation} \label{eq:1}
r=\frac{a_1\beta^{p-1}+a_2\beta^{p-2}+\cdots+a_p}{\beta^p-1}\,.
\end{equation}

Since $\gamma(\beta)>0$, $1/n$ has a purely periodic expansion for every positive integer $n$ large enough. 
We thus infer from (\ref{eq:1}) that $\beta$ is an algebraic integer.

A similar argument applies to prove that $\beta$ is a unit (see \cite[Proposition~6]{AKI1}).

It was proved in \cite{Schmidt80} that the set of rational numbers with a purely periodic $\beta$-expansion is nowhere dense in $(0,1)$ if $\beta$ is an algebraic integer that is not a Pisot neither a Salem number\footnote{An algebraic integer $\beta>1$ is a Salem number if all its Galois conjugates (different from~$\beta$) have modulus at most equal to $1$ and with at least one Galois conjugate of modulus equal to~$1$.}. 
Hence $\beta$ is either a Pisot unit or a Salem number.

Let us assume that $\beta$ is a Salem number. 
It is known that minimal polynomials of Salem numbers are reciprocal, which implies that $1/\beta \in (0,1)$ is a conjugate of $\beta$. 
Using Galois conjugaison, relation (\ref{eq:1}) still holds when replacing $\beta$ by~$1/\beta$. 
In this equation, the right hand-side then becomes non-positive. 
Therefore 0 is the only number with purely periodic $\beta$-expansion and $\gamma(\beta)=0$, a contradiction with our initial assumption. 
Note that this argument already appeared in \cite[Proposition~5]{AKI1}. 

Thus $\beta$ is a Pisot unit, which concludes the proof.
\end{proof}

In the sequel, we consider finite, right infinite, left infinite and bi-infinite words over the alphabet $\mathcal{A}_{\beta} := \{0,1,\ldots,\lceil\beta\rceil-1\}$. 
When the location of the 0 index is needed in a bi-infinite word, it is denoted by the $\biinf$ symbol, as in $\cdots a_{-1} a_0\biinf a_1 a_2\cdots$.
A~suffix of a right infinite word $a_0a_1\cdots$ is a right infinite word of the form $a_k a_{k+1}\cdots$ for some non-negative integer~$k$. 
A~suffix of a left infinite word $\cdots a_{-1}a_0$ is a finite word of the form $a_{k}a_{k+1}\cdots a_0$ for some non-positive integer~$k$.
A~suffix of a bi-infinite word $\cdots a_{-1} a_0\biinf a_1\cdots$ is 
a right infinite word of the form $a_{k}a_{k+1}\cdots$ for some integer~$k$. 
Given a finite word $u=u_0\cdots u_r$, we denote by $u^{\omega}=u_0\cdots u_r u_0\cdots u_r\cdots$ 
(resp.\ $\leftexp{\omega}{u} = \cdots u_0\cdots u_r u_0\cdots u_r$) the right (resp.\ left) infinite periodic word obtained by an infinite concatenation of~$u$. 
A~left infinite word $\cdots a_{-1}a_0$ is eventually periodic if there exists a positive integer $p$ such that $a_{-n-p} = a_{-n}$ for every positive integer $n$ large enough.

\medskip

It is well-known that the $\beta$-expansion of~$1$ plays a crucial role. 
Set $d_{\beta}(1):=(t_i)_{i\geq1}$. 
When $d_{\beta}(1)$ is finite with length~$n$, that is when $t_n\neq0$ and $t_i=0$ for every $i>n$, an infinite expansion of $1$ is given by $d_{\beta}^*(1)=(t_1\cdots t_{n-1}(t_n-1))^\omega$.
If $d_{\beta}(1)$ is infinite we just set $d_{\beta}^*(1)=d_{\beta}(1)$. The knowledge of this improper expansion of~$1$ allows to decide whether a given word over $\mathcal{A}_{\beta}$ is the $\beta$-expansion of some real number.   

\begin{definition}
A~finite, left infinite, right infinite or bi-infinite word over the alphabet $\mathcal{A}_\beta$ is an \emph{admissible} word if all its suffixes are lexicographically smaller than~$d_{\beta}^*(1)$.
\end{definition}

A~classical result of Parry~\cite{Parry60} is that a finite or right infinite word $a_1 a_2\cdots$ is the $\beta$-expansion of a real number in $[0,1)$ if and only if it is admissible. 
Admissible conditions are of course easier to check when $d_{\beta}^*(1)$ is eventually periodic. 
In the case where $\beta$ is a Pisot number, $d_{\beta}^*(1)$ is eventually periodic and every element in $\mathbb{Q}(\beta)\cap[0,1)$ has eventually periodic $\beta$-expansion according to~\cite{BM77,Schmidt80}. 
In contrast, algebraic numbers that do not belong to the number field $\mathbb{Q}(\beta)$ are expected to have a chaotic $\beta$-expansion (see~\cite{AdBu}). 
Note that if the set of real numbers with an eventually periodic $\beta$-expansion forms a field, then $\beta$ is either a Salem or a Pisot number~\cite{Schmidt80}.

\medskip

\noindent{\bf \itshape Warning.} --- In all what follows, $\beta$ will denote a Pisot unit and admissibility will refer to this particular Pisot number. 
The sequence $d_{\beta}^*(1)$ is thus eventually periodic and we set 
$d_{\beta}^*(1) := t_1 t_2\cdots t_m(t_{m+1}\cdots t_{m+n})^{\omega}$. 
Notice that since $\beta$ is a Pisot unit, we have 
$\mathbb{Z}[1/\beta] = \mathbb{Z}[\beta]$.

\section{Thurston's tiling associated with a Pisot unit}

Given a real number $\beta$ there is a natural way to tile the real line using the notion of $\beta$-integers (see below). 
In his famous lectures, Thurston~\cite{thurston} discussed the construction of a dual tiling, a sort of Galois conjugate of this tiling, when $\beta$ is a Pisot number. 
Note that some examples of similar tiles had previously been introduced by Rauzy~\cite{rau1} to study arithmetic properties of an irrational translation on a two-dimensional torus. 
In this section, we recall Thurston's construction. 

The vectorial space $\mathbb{R}^n \times \mathbb{C}^m$ is endowed with its natural product topology. 
In the sequel, the closure~$\overline{X}$, the interior $\mathring{X}$ and the boundary $\partial X$ of a subset $X$ of $\mathbb{R}^n \times \mathbb{C}^m$ will refer to this topology.

\medskip

The $\beta$-transformation induces a decomposition of every positive real number in a $\beta$-fractional and a $\beta$-integral part as follows:
Let $k\in\mathbb{N}$ be such that $\beta^{-k}x\in[0,1)$ and $d_\beta(\beta^{-k}x)=a_{-k+1}a_{-k+2}\cdots$.
Then 
$$
x=\underbrace{a_{-k+1}\beta^{k-1}+\cdots+a_{-1}\beta+a_0}_{\mbox{$\beta$-integral part}} + \underbrace{a_1\beta^{-1}+a_2\beta^{-2}+\cdots}_{\mbox{$\beta$-fractional part}}. 
$$

In the following, we will use the notation $x=a_{-k+1}\cdots a_{-1}a_0\decdot a_1 a_2\cdots$.
We also note $x=a_{-k+1}\cdots a_{-1}a_0\decdot$ when the $\beta$-fractional part vanishes and $x=\decdot a_1 a_2\cdots$ when the $\beta$-integral part vanishes. 

\medskip

Since $a_{-k+1}=0$ if $x<\beta^{k-1}$, this decomposition does not depend on the choice of $k$. 
The set of $\beta$-integers is the set of positive real numbers with vanishing $\beta$-fractional part:
$$
\mathrm{Int}(\beta) := \left\{a_{-k+1}\cdots a_{-1}a_0\decdot \mid 
a_{-k+1}\cdots a_{-1}a_0\ \mbox{is admissible,}\ k\in\mathbb{N} \right\}.
$$

Let $\sigma_2,\ldots,\sigma_r$ be the non-identical real embeddings of $\mathbb{Q}(\beta)$ in $\mathbb{C}$ and let $\sigma_{r+1},\ldots, \sigma_{r+s}$ be the complex embeddings of $\mathbb{Q}(\beta)$ 
in~$\mathbb{C}$. 
We define the map $\Xi$ by:
\begin{eqnarray*}
\Xi:\ \mathbb{Q}(\beta) & \to & \mathbb{R}^{r-1}\times\mathbb{C}^s \\
x & \mapsto & (\sigma_2(x),\ldots,\sigma_{r+s}(x)).
\end{eqnarray*} 

\begin{definition} 
Let $\beta$ be a Pisot number. 
The compact subset of $\mathbb{R}^{r-1} \times \mathbb{C}^s$ defined by  
$$
\mathcal{T} := \overline{\Xi(\mathrm{Int}(\beta))}
$$
is called the central tile associated with~$\beta$. 
\end{definition}

\begin{figure}[ht]
\centerline{\includegraphics{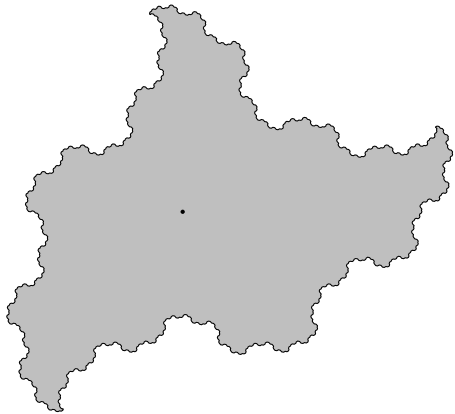}}
\caption{The central tile for the smallest Pisot number~$\eta$, which satisfies $\eta^3=\eta+1$.}
\end{figure}

The previous construction can be naturally extended to associate a similar tile with any $y\in\mathbb{Z}[\beta]\cap[0,1)$.  

\begin{definition} 
Given such a number $y$, we define the tile
\begin{equation*}
\mathcal{T}(y) :=\Xi(y)+\overline{\{\Xi(a_{-k+1}\cdots a_0\decdot)  \mid a_{-k+1}\cdots a_0 d_\beta(y)\ \mbox{is an admissible word}\}}. 
\end{equation*}
\end{definition} 
Note that $\mathcal{T}(0)=\mathcal{T}$. 
It is also worth mentioning that: 
\begin{itemize}
\itemsep5pt
\item[$\bullet$] 
there are exactly $n+m$ different tiles up to translation; 
\item[$\bullet$] 
the tiles $\mathcal{T}(y)$ induce a covering of the space $\mathbb{R}^{r-1}\times\mathbb{C}^s$, that is,   
\begin{equation}\label{eq:covering}
\bigcup_{y\in\mathbb{Z}[\beta]\cap[0,1)}\mathcal{T}(y) = \mathbb{R}^{r-1}\times\mathbb{C}^s.
\end{equation}
\end{itemize}

\begin{figure}[ht]
\centerline{\includegraphics[scale=.75]{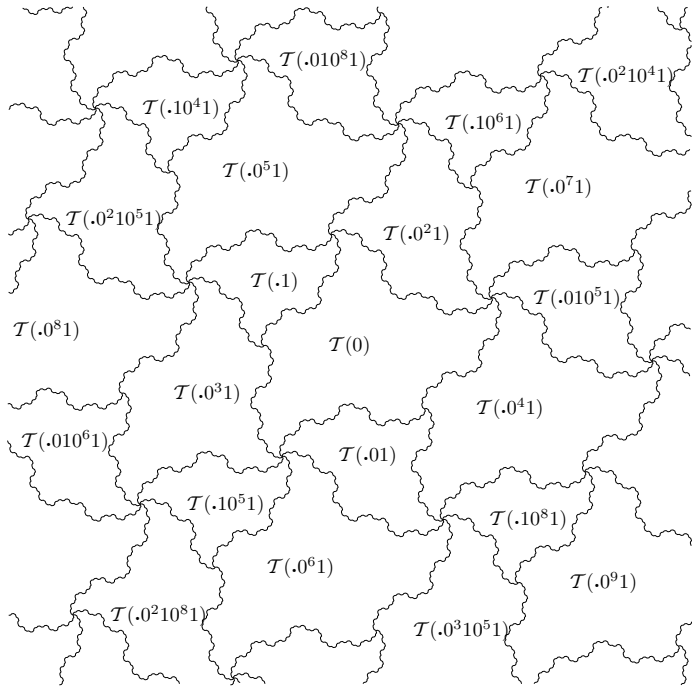}}
\caption{Aperiodic tiling associated with the smallest Pisot number~$\eta$.}
\end{figure}

The first observation follows from the fact that $d_{\beta}^*(1)$ is eventually periodic (see for instance~\cite{AKI2}). 
The second property is a consequence of the fact that $\Xi(\mathbb{Z}[\beta]\cap[0,\infty))$ is dense in $\mathbb{R}^{r-1}\times\mathbb{C}^s$, as proved in~\cite{AKI6,AKI2}.  

Furthermore, in the case where $\beta$ is a cubic Pisot unit, Akiyama, Rao and Steiner~\cite{AkiyamaRaoSteiner} proved that this covering is actually a tiling, meaning that the interior of tiles never meet and their boundaries have zero Lebesgue measure. 
We then have the following property.

\begin{theoremARS}
Let $\beta$ be a cubic Pisot unit. 
If $x$ and $y$ are two distinct elements in $\mathbb{Z}[\beta]\cap[0,1)$, then 
$$
\mathring{\mathcal{T}(x)} \cap \mathring{\mathcal{T}(y)} = \emptyset.
$$
\end{theoremARS}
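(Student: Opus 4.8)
The plan is to upgrade the covering~(\ref{eq:covering}) to a tiling by showing that it has multiplicity one almost everywhere. Theorem~ARS then follows at once, since a hypothetical overlap $\mathring{\mathcal{T}(x)}\cap\mathring{\mathcal{T}(y)}$ with $x\neq y$ would be a nonempty open set of covering multiplicity at least~$2$. The multiplicity-one statement will in turn be reduced to a combinatorial finiteness property of the $\beta$-numeration which, for cubic~$\beta$, one can check by an explicit analysis.

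First I would record the self-replicating structure of the family $\{\mathcal{T}(y)\}$. The embeddings $\sigma_i$ conjugate multiplication by~$\beta$ to a linear contraction $\mathbf{B}$ of $\mathbb{R}^{r-1}\times\mathbb{C}^s$ --- a genuine contraction since $\beta$ is a Pisot number --- and the tiles obey a graph-directed set equation: each $\mathcal{T}(y)$ is a finite union of contracted, translated tiles $\mathbf{B}\,\mathcal{T}(y')+\Xi(v)$, and dually each $\mathcal{T}(y)$ is covered by finitely many dilated, translated pieces $\mathbf{B}^{-1}\big(\mathcal{T}(y'')-\Xi(w)\big)$, the indexing being governed by admissibility of one-digit extensions. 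Since there are only $n+m$ prototiles up to translation and the whole covering is generated by iterating this set equation, a standard argument of the theory of self-replicating tilings shows that the multiplicity $p(z):=\#\{y\in\mathbb{Z}[\beta]\cap[0,1):z\in\mathcal{T}(y)\}$ equals a positive integer constant~$m$ for almost every~$z$; moreover each boundary $\partial\mathcal{T}(y)$ is Lebesgue-null, being the attractor of a graph-directed system of affine contractions whose exponents one checks --- using the classification below --- to yield Hausdorff dimension strictly below~$2$.

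Next I would reduce the equality $m=1$ to a finiteness property of the numeration by passing to the natural extension of the $\beta$-transformation. Glueing the tiles $\mathcal{T}(y)$ with the cylinders of admissible words produces a compact domain on which the natural extension of $T_\beta$ acts, and this action is measure-theoretically bijective onto that domain if and only if the weak finiteness property~{\rm (W)} holds: every $x\in\mathbb{Z}[1/\beta]\cap[0,1)$ can be approximated arbitrarily well, in the appropriate balanced sense, by numbers having a finite $\beta$-expansion. Moreover, the bijectivity of this natural extension is precisely the statement $m=1$. This equivalence belongs to the general theory of Pisot $\beta$-tilings and may be invoked here.

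It then remains to prove~{\rm (W)} for every cubic Pisot unit, and this is where I expect the real work --- and the main obstacle --- to lie. Here I would organize these numbers into families according to their minimal polynomials, using the known classification. The subfamily satisfying~{\rm (F)} is immediate: for those~$\beta$ every element of $\mathbb{Z}[1/\beta]\cap[0,1)$ already has a finite expansion, so~{\rm (W)} holds outright. For each of the remaining families one argues by a finite, essentially algorithmic, control of carry propagation in base~$\beta$: given $x\in\mathbb{Z}[1/\beta]\cap[0,1)$ and $\varepsilon>0$, one constructs two finite words whose associated real numbers lie within~$\varepsilon$ of~$x$ in the sense required by~{\rm (W)}, by tracking the carries generated when a small quantity is added. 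This is the technical heart of the statement: it demands a delicate case analysis of how carries cascade, carried out uniformly over each parametrized family of defining polynomials.
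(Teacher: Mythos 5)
You should first note that the paper does not prove Theorem~ARS at all: it imports it verbatim from the reference \cite{AkiyamaRaoSteiner}, so there is no in-paper argument to compare yours against. That said, your outline is essentially a faithful reconstruction of the strategy of that cited work: one passes from the covering~(\ref{eq:covering}) to an almost-everywhere constant multiplicity via the self-replicating set equation, identifies multiplicity one with the measure-theoretic bijectivity of a natural extension of $T_\beta$ built by glueing the tiles to the cylinders of admissible words, recognizes that bijectivity as equivalent to the weak finiteness property~(W), and then verifies~(W) for all cubic Pisot units. So the architecture is sound and correctly sourced.

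The genuine gap is that the last step --- the verification of~(W) for the cubic Pisot units that do \emph{not} satisfy~(F) --- is precisely the hard content of the theorem, and your proposal only describes it (``a delicate case analysis of how carries cascade'') without carrying out a single case. For the (F)~families the claim is indeed immediate, but for the remaining families (minimal polynomials $x^3-ax^2-bx-1$ with $-a+1\le b\le -2$, and the reciprocal families with constant term~$+1$, which include totally real units whose tiles live in $\mathbb{R}^2$ rather than $\mathbb{C}$) the carry analysis is a substantial, paper-length computation, and nothing in your text constrains how the carries terminate. As written, the proposal is a correct reduction plus a citation-shaped placeholder where the proof should be; to make it a proof you would either have to execute the case analysis or explicitly cite \cite{AkiyamaRaoSteiner} for the verification of~(W), at which point you are citing the theorem itself. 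A minor additional caution: your appeal to Hausdorff dimension of the boundary being strictly below~$2$ is not needed (nullity of the boundaries already follows from multiplicity one a.e.\ together with local finiteness, as in Fact~\ref{fact}) and would itself require the classification you have not yet established.
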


In the sequel, we will also need the following observation.

\begin{fact} \label{fact}
There exists a constant $C$ such that every $z\in\mathbb{R}^{r-1}\times\mathbb{C}^s$ is contained in at most $C$ different tiles $\mathcal{T}(y)$.
\end{fact}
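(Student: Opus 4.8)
The plan is to use the covering property~\eqref{eq:covering} together with the fact that there are only finitely many tiles up to translation, and to control the overlaps by working locally with a bounded diameter. First I would fix a point $z\in\mathbb{R}^{r-1}\times\mathbb{C}^s$ and ask which translation vectors $\Xi(y)$, for $y\in\mathbb{Z}[\beta]\cap[0,1)$, can possibly give a tile $\mathcal{T}(y)$ containing $z$. Recall that each tile has the form $\mathcal{T}(y)=\Xi(y)+\mathcal{K}(y)$, where $\mathcal{K}(y)$ is the closure of a set of points $\Xi(a_{-k+1}\cdots a_0\decdot)$ with $a_{-k+1}\cdots a_0 d_\beta(y)$ admissible. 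Since $\beta$ is a Pisot number, every conjugate $\sigma_j(\beta)$ has modulus strictly less than~$1$, so the series defining these conjugate values converge geometrically; hence each $\mathcal{K}(y)$ is contained in a fixed ball $B(0,R)$ whose radius $R$ depends only on $\beta$ and on the bound $\lceil\beta\rceil-1$ on the digits (and not on $y$). Consequently, if $z\in\mathcal{T}(y)$ then $\Xi(y)\in B(z,R)$.

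Next I would bound the number of $y\in\mathbb{Z}[\beta]\cap[0,1)$ with $\Xi(y)\in B(z,R)$. Here the key point is that $\Xi$, extended to the full geometric embedding sending $x$ to $(x,\sigma_2(x),\ldots,\sigma_{r+s}(x))\in\mathbb{R}\times(\mathbb{R}^{r-1}\times\mathbb{C}^s)$, maps the additive group $\mathbb{Z}[\beta]$ onto a full-rank lattice $\Lambda$ in $\mathbb{R}^{r+s+\,(\text{extra for complex})}$ — this is the standard Minkowski-type embedding of the ring of integers (or of $\mathbb{Z}[\beta]$, which has finite index in it, still a lattice). The constraint $y\in[0,1)$ confines the first coordinate to a bounded interval, and the constraint $\Xi(y)\in B(z,R)$ confines the remaining coordinates to a ball of fixed radius~$R$. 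Thus the admissible $y$'s correspond to lattice points of $\Lambda$ lying in a fixed-volume region (a product of an interval of length~$1$ with a ball of radius~$R$), and a lattice contains at most a bounded number — call it $C_0$ — of points in any region of bounded diameter, with $C_0$ depending only on $\Lambda$, hence only on~$\beta$.

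Combining the two steps: for every $z$, the tiles $\mathcal{T}(y)$ containing $z$ are indexed by a subset of this bounded set of at most $C_0$ values of~$y$, so $z$ lies in at most $C:=C_0$ tiles. I would take this $C$ as the constant in the statement. The main obstacle — really the only non-formal point — is justifying cleanly that the geometric embedding of $\mathbb{Z}[\beta]$ is a lattice and that the region cut out by $y\in[0,1)$ and $\Xi(y)\in B(z,R)$ is genuinely bounded in all coordinates; once that is in place, the finiteness of lattice points in a bounded region is immediate and uniform in~$z$. One should also double-check that the radius~$R$ bounding all the sets $\mathcal{K}(y)$ is uniform, which follows at once from $|\sigma_j(\beta)|<1$ and the geometric series bound $R\le (\lceil\beta\rceil-1)\sum_{j}\sum_{i\ge 0}|\sigma_j(\beta)|^{i}<\infty$, plus the analogous bound on the $\beta$-integral parts, which are also uniformly bounded in conjugate absolute value.
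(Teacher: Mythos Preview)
Your proof is correct and follows the same two-step approach as the paper. The paper is simply more concise: for the uniform boundedness it observes directly that $\mathcal{T}(y)\subseteq\Xi(y)+\mathcal{T}$ (since the admissibility constraint for $\mathcal{T}(y)$ is stronger than for~$\mathcal{T}$), and for the counting step it just invokes that $\Xi(\mathbb{Z}[\beta]\cap[0,1))$ is uniformly discrete, which is exactly what your Minkowski-lattice argument establishes.
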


Indeed, since $\mathcal{T}(y)\subseteq \Xi(y)+\mathcal{T}$ and $\mathcal{T}$ is compact, a point $z\in\mathbb{R}^{r-1}\times\mathbb{C}^s$ cannot belong to the tile $\mathcal{T}(y)$ as soon as the distance between $y$ and $z$ is large enough. 
The result then follows since the set $\Xi(\mathbb{Z}[\beta]\cap[0,1))$ is uniformly discrete.

\section{A Galois type theorem for expansions in a Pisot unit base}

We introduce now a suitable subdivision of the central tile $\mathcal{T}$.  
The set $\mathrm{Int}(\beta)$ is a discrete subset of~$\mathbb{R}$, 
so that it can been ordered in a natural way. 
Then, we have the nice property that two consecutive points in $\mathrm{Int}(\beta)$ can differ only by a finite number of values. 
Namely, if $t_1\cdots t_i$, $i\ge0$, is the longest prefix of $d_\beta^*(1)$ which is a suffix of $a_{-k+1}\cdots a_0$, then this difference is equal to $T_{\beta}^i(1)$ (see~\cite{Akiyama07,thurston}). 
Since $d_\beta^*(1)=t_1\cdots t_m(t_{m+1}\cdots t_{m+n})^{\omega}$, then we have $T_\beta^i(1)=T_\beta^{i+n}(1)$ for $i\ge m$, which confirms our claim. 

A~natural partition of $\mathrm{Int}(\beta)$ is now given by considering the distance between a point and its successor in~$\mathrm{Int}(\beta)$.

\begin{definition}
For every $0\le i<m+n$, we define the subtile $\mathcal{T}_i$ of $\mathcal{T}$ to be the closure of the set of those points $\Xi(a_{-k+1}\cdots a_0\decdot)$ such that the distance from $a_{-k+1}\cdots a_0\decdot$ to its successor in $\mathrm{Int}(\beta)$ is equal to~$T_\beta^i(1)$.
\end{definition}

\begin{figure}[ht]
\centerline{\includegraphics{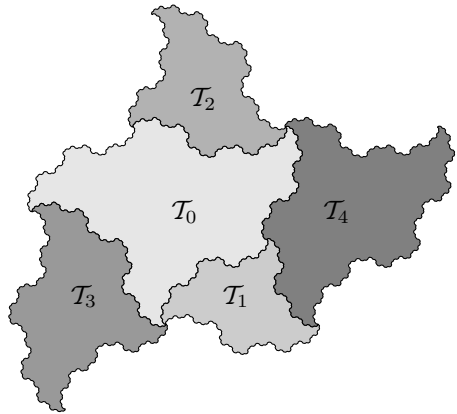}}
\caption{The decomposition of the central tile associated with $\eta$ into subtiles. 
According to the expansion $d_{\eta}(1)=10001$, the central tile is subdivided into exactly five subtiles.}
\end{figure}

As detailed in~\cite{BS2,BSJNT}, the Perron-Frobenius theorem coupled with self-affine decompositions of tiles implies that the subtiles have nice topological properties. 
One of them will be useful in the following: their interiors are disjoint.

\begin{propositionBS}
\emph{Let $\beta$ be a Pisot unit. 
For every pair $(i,j)$, $i\not=j$, $0\le i,j <m+n$,   
$$
\mathring{\mathcal{T}_i} \cap \mathring{\mathcal{T}_j}= \emptyset.
$$}
\end{propositionBS}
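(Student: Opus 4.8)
The plan is to exploit the self-affine, graph-directed structure carried by the subtiles. Let $B$ denote the contraction of $\mathbb{R}^{r-1}\times\mathbb{C}^s$ obtained by letting each conjugate $\sigma_k(\beta)$ act on the corresponding coordinate, so that $\Xi(\beta x)=B\,\Xi(x)$ for all $x\in\mathbb{Q}(\beta)$, and put $\mathbf 1:=\Xi(1)=(1,\dots,1)$; note that $|\det B|=1/\beta$ because $\beta$ is a unit. Recall that $\mathcal{T}_i$ is the closure of the image under $\Xi$ of the $\beta$-integers $w$ whose \emph{suffix type} is $i$, meaning that $t_1\cdots t_i$ is the longest prefix of $d_\beta^*(1)$ which is a suffix of $w$. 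Appending a digit $c$ to such a $w$ turns $\Xi(w\decdot)$ into $B\,\Xi(w\decdot)+c\mathbf 1$, the word $wc$ is admissible exactly when $c\le t_{i+1}$, and in that case the suffix type of $wc$ depends only on $i$ and $c$ (it is the value of the Knuth--Morris--Pratt failure function of $d_\beta^*(1)$ applied to $t_1\cdots t_i\,c$, with the usual wrap-around $m+n\mapsto m$). Writing $i\xrightarrow{c}i'$ for the corresponding edges of a finite deterministic automaton $\mathcal{G}$ on the state set $\{0,1,\dots,m+n-1\}$, one gets the graph-directed iterated function system
\begin{equation*}
 \mathcal{T}_{i'}=\bigcup_{i\,\xrightarrow{\,c\,}\,i'}\bigl(B\,\mathcal{T}_i+c\mathbf 1\bigr),
\end{equation*}
where the reverse inclusion uses that the union is finite, so that closure commutes with it. I would begin by establishing this identity in detail.

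Next I would run the Perron--Frobenius machinery on the incidence matrix $M$ of $\mathcal{G}$, given by $M_{i',i}=\#\{c:i\xrightarrow{c}i'\}$. Since $\mathcal{G}$ is a right-resolving presentation of the $\beta$-shift, the spectral radius of (the recurrent part of) $M$ equals $\beta$. Applying Lebesgue measure $\mu$ to the displayed identity gives $\mu(\mathcal{T}_{i'})\le\frac1\beta\sum_i M_{i',i}\,\mu(\mathcal{T}_i)$, that is, $M\vec{\mu}\ge\beta\vec{\mu}$ for the non-negative vector $\vec{\mu}=(\mu(\mathcal{T}_i))_i$. The Perron--Frobenius theorem then forces $M\vec{\mu}=\beta\vec{\mu}$; consequently $\vec{\mu}$ is a positive multiple of the Perron eigenvector, so each $\mu(\mathcal{T}_i)>0$, and every inequality above is an equality, which means that for each $i'$ the sets $B\,\mathcal{T}_i+c\mathbf 1$ occurring in the decomposition of $\mathcal{T}_{i'}$ pairwise meet in Lebesgue-null sets. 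The same primitivity, together with the self-affine structure, yields the usual topological corollaries (see \cite{BS2,BSJNT}): $\mu(\partial\mathcal{T}_i)=0$ and $\mathring{\mathcal{T}_i}\neq\emptyset$ for every $i$.

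It remains to upgrade ``the pieces inside a single subtile are essentially disjoint'' to ``two distinct subtiles are essentially disjoint'', and this is the delicate point, since it does not follow formally from the above. I would argue that the $\mathcal{T}_i$ are exactly the atoms of a Markov partition for the natural extension of the $\beta$-transformation: the map
\begin{equation*}
 \cdots a_{-1}a_0\biinf a_1 a_2\cdots\ \longmapsto\ \Bigl(\Xi(\cdots a_{-1}a_0),\ \textstyle\sum_{k\ge1}a_k\beta^{-k}\Bigr)
\end{equation*}
conjugates the two-sided shift on bi-infinite admissible words to a baker-type transformation on $\bigcup_i\mathcal{T}_i\times J_i$, where the $J_i\subset[0,1)$ are the \emph{pairwise disjoint} intervals that record, through admissibility, which shifts of $d_\beta^*(1)$ the future may reach when the past has suffix type $i$. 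Since the two-sided shift preserves the product of the Parry measure with Lebesgue measure while the $J_i$ are disjoint, one deduces (this is the content of \cite{BS2,BSJNT}) that $\mu(\mathcal{T}_i\cap\mathcal{T}_j)=0$ whenever $i\neq j$. The proposition then follows at once: $\mathring{\mathcal{T}_i}\cap\mathring{\mathcal{T}_j}$ is open, so if it were non-empty it would have positive Lebesgue measure, contradicting $\mu(\mathcal{T}_i\cap\mathcal{T}_j)=0$. The only genuinely hard step is this last transition, the cross-subtile estimate $\mu(\mathcal{T}_i\cap\mathcal{T}_j)=0$: the within-equation disjointness is automatic from Perron--Frobenius, but separating two different subtiles forces one to use the combinatorics of $\mathcal{G}$ --- equivalently, the sofic structure of the $\beta$-shift and the fact that the future-interval partition $\{J_i\}$ is honest.
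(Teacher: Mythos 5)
The paper itself gives no proof of Proposition~BS: it records only that, ``as detailed in~\cite{BS2,BSJNT}, the Perron-Frobenius theorem coupled with self-affine decompositions of tiles'' yields the disjointness of the interiors. That is exactly the architecture you set up, and up to and including the graph-directed set equation, $|\det B|=1/\beta$, the sub-invariance $M\vec\mu\ge\beta\vec\mu$, and the equality forced by Perron--Frobenius (note you do need irreducibility of $M$ and $\vec\mu\neq 0$, both of which hold here: state $0$ communicates with every state, and $\mathcal{T}=\bigcup_i\mathcal{T}_i$ has positive measure), your sketch is a correct and more detailed account of what the paper merely cites.

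The difficulty lies in the one step you try to argue rather than cite, and which you rightly single out as the delicate point: the cross-subtile estimate $\mu(\mathcal{T}_i\cap\mathcal{T}_j)=0$. Your justification rests on the assertion that the future-intervals $J_i$ attached to the past-types are \emph{pairwise disjoint}. They are not: the set of admissible futures compatible with a past of suffix type $i$ is exactly the set of $\decdot a_1a_2\cdots$ lying in $[0,T_\beta^i(1))$, so $J_i=[0,T_\beta^i(1))$, and these intervals are nested rather than disjoint --- this is visible in the statement of Theorem~IR, whose natural-extension domain is $\bigcup_i\mathcal{T}_i\times[0,T_\beta^i(1))$. Worse, with the correct (nested) $J_i$ the essential disjointness of the product atoms $\mathcal{T}_i\times J_i$ is \emph{equivalent} to that of the $\mathcal{T}_i$, since $(\mathcal{T}_i\times J_i)\cap(\mathcal{T}_j\times J_j)=(\mathcal{T}_i\cap\mathcal{T}_j)\times\bigl[0,\min(T_\beta^i(1),T_\beta^j(1))\bigr)$; so ``the shift preserves the product measure on a disjoint union'' cannot be the source of the disjointness unless one first proves that the natural extension map is almost everywhere injective on that union, which is what the cited references actually establish (via the uniqueness, for almost every point, of two-sided representations, in the spirit of Lemma~\ref{lemC}). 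As written, the argument for the key step is therefore circular; the remainder is sound and coincides with the route the paper attributes to \cite{BS2,BSJNT}.
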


The subdivision of the central tile into the subtiles $\mathcal{T}_i$ allows to characterize those real numbers in $(0,1)$ having a purely periodic $\beta$-expansion. 
More precisely, Ito and Rao~\cite{Hui2} proved the following result (see also~\cite{BSJNT} for a shorter and more natural proof). 

\begin{theoremIR}
\emph{Let $\beta$ be a Pisot unit and $x\in[0,1)$. 
The $\beta$-expansion of $x$ is purely periodic if and only if $x\in\mathbb{Q}(\beta)$ and
$$
(-\Xi(x),x)\in \mathcal{E}_{\beta} := \bigcup_{i=0}^{n+m-1} \mathcal{T}_i\times[0,T_{\beta}^i(1)).
$$}
\end{theoremIR}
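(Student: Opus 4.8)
The plan is to prove this characterization (due to Ito and Rao) by realizing $([0,1),T_\beta)$ as a metric factor of an explicit invertible map on $\mathcal{E}_\beta$ — a geometric model of the natural extension of $T_\beta$ — and then exploiting crucially that $\beta$ is a \emph{unit}. For the direct implication, assume $d_\beta(x)=(u_1\cdots u_p)^\omega$ is purely periodic. Summing the geometric series gives $x=(u_1\beta^{p-1}+\cdots+u_p)/(\beta^p-1)\in\mathbb{Q}(\beta)$. For the membership, one checks that the left-infinite periodic word $\leftexp{\omega}{u}$ is admissible (each finite suffix of it is a prefix of a suffix of the admissible word $u^\omega$), so its suffixes $a_{-N+1}\cdots a_0$ belong to $\mathrm{Int}(\beta)$; applying $\Xi$ and using that $|\sigma_j(\beta)|<1$ for all $j\ge 2$, the images $\Xi(a_{-N+1}\cdots a_0\decdot)$ converge to the now-convergent geometric series, whose value is $\Xi(-x)=-\Xi(x)$. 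Hence $-\Xi(x)\in\overline{\Xi(\mathrm{Int}(\beta))}=\mathcal{T}$. To pin down the subtile, note that the distance from $a_{-N+1}\cdots a_0\decdot$ to its successor in $\mathrm{Int}(\beta)$ stabilises, along $N$ in a suitable residue class, to some value $T_\beta^i(1)$, whence $-\Xi(x)\in\mathcal{T}_i$; and admissibility of the bi-infinite periodic word $\leftexp{\omega}{u}\biinf u^{\omega}$, read at the position where the last $i$ letters of its left part equal $t_1\cdots t_i$, forces $\decdot u_1u_2\cdots<T_\beta^i(1)$, i.e.\ $x\in[0,T_\beta^i(1))$. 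Thus $(-\Xi(x),x)\in\mathcal{T}_i\times[0,T_\beta^i(1))\subseteq\mathcal{E}_\beta$.

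For the converse, the central construction is a map $\overline{T}_\beta\colon\mathcal{E}_\beta\to\mathcal{E}_\beta$: on a point $(\mathbf{y},x)$ with $\mathbf{y}\in\mathcal{T}_i$ and $x\in[0,T_\beta^i(1))$, set $a=\lfloor\beta x\rfloor$ and
$$
\overline{T}_\beta(\mathbf{y},x):=\big(\mathrm{diag}(\sigma_2(\beta),\ldots,\sigma_{r+s}(\beta))\,\mathbf{y}+(a,\ldots,a),\ T_\beta x\big).
$$
Since $-\Xi(T_\beta z)=-\Xi(\beta z-a)=\mathrm{diag}(\sigma_2(\beta),\ldots,\sigma_{r+s}(\beta))\,(-\Xi(z))+(a,\ldots,a)$, one has $\overline{T}_\beta(-\Xi(z),z)=(-\Xi(T_\beta z),T_\beta z)$ for every $z\in\mathbb{Q}(\beta)$, so $\overline{T}_\beta$ lifts $T_\beta$. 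The substantive point — and the step I expect to be the main obstacle — is that $\overline{T}_\beta$ indeed maps $\mathcal{E}_\beta$ into itself and is injective (in fact a bijection, i.e.\ genuinely realizes the natural extension of $T_\beta$): this rests on the self-affine set equations relating each subtile $\mathcal{T}_i$ to its contracted images, on the fact that the $\mathcal{T}_i$ have disjoint interiors (Proposition~BS), and on the covering and tiling properties~(\ref{eq:covering}) and Theorem~ARS, which together guarantee that for each target point there is exactly one admissible choice of previous digit. Particular care is needed at the boundaries of the subtiles $\mathcal{T}_i$ and of the half-open intervals $[0,T_\beta^i(1))$.

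Granting this, let $x\in\mathbb{Q}(\beta)\cap[0,1)$ with $(-\Xi(x),x)\in\mathcal{E}_\beta$, and fix a positive integer $q$ with $qx\in\mathbb{Z}[\beta]$. Since $\beta\cdot\mathbb{Z}[\beta]\subseteq\mathbb{Z}[\beta]$, every iterate $T_\beta^n(x)$ lies in $\frac{1}{q}\mathbb{Z}[\beta]\cap[0,1)$, so $\overline{T}_\beta^{\,n}(-\Xi(x),x)=(-\Xi(T_\beta^n x),T_\beta^n x)$ stays in $\mathcal{E}_\beta$ for all $n\ge 0$. Now the pairs $(-\Xi(z),z)$ with $z\in\frac{1}{q}\mathbb{Z}[\beta]$ form a full-rank lattice in $\mathbb{R}^{r-1}\times\mathbb{C}^s\times\mathbb{R}$ — it is the Minkowski embedding of the rank-$[\mathbb{Q}(\beta):\mathbb{Q}]$ group $\frac{1}{q}\mathbb{Z}[\beta]$ composed with a linear change of coordinates — while $\mathcal{E}_\beta\subseteq\mathcal{T}\times[0,1)$ is bounded; hence only finitely many such pairs lie in $\mathcal{E}_\beta$, and the $\overline{T}_\beta$-orbit of $(-\Xi(x),x)$ is finite. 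An injective map with a finite forward orbit has that orbit be a cycle through its starting point, so $T_\beta^k(x)=x$ for some $k\ge 1$, that is, $d_\beta(x)$ is purely periodic. This is the place where the hypothesis that $\beta$ is a unit is used — it is exactly what makes the relevant set of conjugate vectors discrete, so that boundedness forces finiteness.
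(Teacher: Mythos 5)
The paper itself gives no proof of Theorem~IR: it is quoted from Ito and Rao~\cite{Hui2} (with a shorter proof in~\cite{BSJNT}), so your proposal can only be measured against that literature, whose natural-extension strategy you do follow. Your forward direction is essentially correct as a sketch: pure periodicity gives $x\in\mathbb{Q}(\beta)$, the truncations of $\leftexp{\omega}{u}$ are admissible $\beta$-integers whose $\Xi$-images converge to $-\Xi(x)$, and admissibility of the bi-infinite word $\leftexp{\omega}{u}\biinf u^\omega$ pins down a common index $i$ with $-\Xi(x)\in\mathcal{T}_i$ and $x<T_\beta^i(1)$. The genuine gap is in the converse: the entire content of the theorem sits in the step you explicitly defer, namely that $\overline{T}_\beta$ maps $\mathcal{E}_\beta$ into itself and is injective at the points you need, boundary points included. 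This is not a routine verification. The pieces $\mathrm{diag}(\sigma_2(\beta),\ldots,\sigma_{r+s}(\beta))\,\mathcal{T}_i+(a,\ldots,a)$ can meet along their boundaries, so ``exactly one admissible choice of previous digit'' is exactly what may fail on the set where your point $(-\Xi(x),x)$ might lie; and one of the ingredients you invoke, Theorem~ARS, is only available for cubic Pisot units (for a general Pisot unit the corresponding tiling statement is the open Pisot conjecture), whereas Theorem~IR is asserted for every Pisot unit. The published proofs are devoted precisely to establishing this invariance and injectivity without such an assumption.

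A concrete symptom that the gap is real is your localization of where the unit hypothesis enters. You say it is what makes the set of conjugate vectors discrete, but $\frac{1}{q}\mathbb{Z}[\beta]$ is a full-rank lattice under the Minkowski-type embedding for \emph{any} algebraic integer $\beta$, and $T_\beta$ preserves $\frac{1}{q}\mathbb{Z}[\beta]$ because $\beta\,\mathbb{Z}[\beta]\subseteq\mathbb{Z}[\beta]$; so your finiteness-of-orbit argument runs verbatim for non-unit Pisot numbers. Consequently, if the deferred invariance-plus-injectivity step were as routine as suggested, the same argument would prove the purely archimedean characterization for non-unit Pisot bases, which is false: for non-units one must add conditions at the finite places dividing the norm of $\beta$, and rationals whose denominators share a factor with that norm are never purely periodic even when the archimedean condition holds. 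Hence the unit hypothesis must do its work inside the very step you left open, confirming that this step is the heart of the proof and cannot be dispatched with ``particular care is needed at the boundaries''.
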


\begin{figure}[ht]
\centerline{\includegraphics[scale=.75]{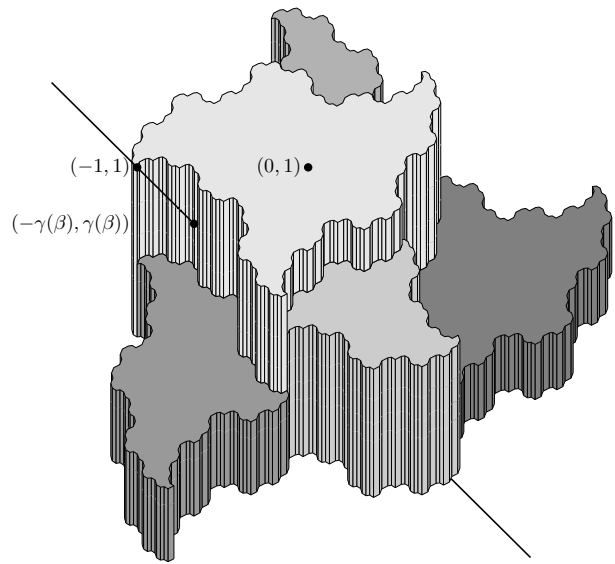}}
\caption{The set $\mathcal{E}_{\eta}$ and the line $(-x,x)$, $x \in \mathbb{R}$.}
\end{figure}

We immediately deduce the following result from Theorem IR.

\begin{corollary}\label{cor:boundary}
Let $\beta$ be a Pisot unit. Then, one of the following holds: 
\begin{itemize}
\itemsep5pt
\item[(i)]
$\gamma(\beta)=T_\beta^i(1)$ for some $i\in\{0,\ldots,n+m-1\}$,
\item[(ii)]
the $(r+s-1)$-dimensional vector $(-\gamma(\beta), \dots, -\gamma(\beta))$ is in $\mathcal{T}_i\cap\mathcal{T}_j$ with $T_{\beta}^j(1)<\gamma(\beta)<T_{\beta}^i(1)$, 
\item[(iii)]
the $(r+s-1)$-dimensional vector $(-\gamma(\beta), \dots, -\gamma(\beta))$ lies on the boundary of $\mathcal{T}$.
\end{itemize}
\end{corollary}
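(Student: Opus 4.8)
The plan is to use Theorem~IR to rewrite the purely‑periodic condition for rationals as a membership statement, and then to identify $\gamma(\beta)$ as a value that is simultaneously approached ``from inside'' and ``from outside''. For $t\in\mathbb{R}$ put $v(t)=(-t,\dots,-t)\in\mathbb{R}^{r-1}\times\mathbb{C}^s$; the map $v$ is continuous and satisfies $v(r)=-\Xi(r)$ for every $r\in\mathbb{Q}$, because every embedding of $\mathbb{Q}(\beta)$ fixes $\mathbb{Q}$. Hence, by Theorem~IR, a rational $r\in[0,1)$ has a purely periodic $\beta$-expansion if and only if $v(r)\in\mathcal{T}_i$ and $r<T_\beta^i(1)$ for some $i\in\{0,\dots,n+m-1\}$, that is, if and only if $r$ belongs to $G:=\bigcup_{i=0}^{n+m-1}\bigl(J_i\cap[0,T_\beta^i(1))\bigr)$, where $J_i:=\{t\in\mathbb{R}\mid v(t)\in\mathcal{T}_i\}$. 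Each $J_i$ is closed, because $\mathcal{T}_i$ is compact and $v$ is continuous, and $\bigcup_i\mathcal{T}_i=\mathcal{T}$ since each $\mathcal{T}_i$ is the closure of one class of a finite partition of $\mathrm{Int}(\beta)$. Writing $\gamma:=\gamma(\beta)$, the definition of $\gamma(\beta)$ becomes $\gamma=\sup\{c\in[0,1)\mid\mathbb{Q}\cap[0,c]\subseteq G\}$.

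I would then read off two facts from this supremum. First, $\mathbb{Q}\cap[0,\gamma)\subseteq G$ is immediate, so $\gamma\in\overline{G}$ (approach $\gamma$ by rationals from below; and $0\in G$ handles $\gamma=0$); since $\overline{G}\subseteq\bigcup_i\bigl(J_i\cap[0,T_\beta^i(1)]\bigr)$, there is an index $i$ with $v(\gamma)\in\mathcal{T}_i$ and $\gamma\le T_\beta^i(1)$. As $T_\beta^i(1)\in[0,1)$ for $i\ge1$ while $T_\beta^0(1)=1$, the case $\gamma=1$ forces $i=0$ and $\gamma=T_\beta^0(1)$, which is~(i); so from now on $\gamma<1$. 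Second, for every $c$ with $\gamma<c<1$ we have $\mathbb{Q}\cap[0,c]\not\subseteq G$, so there is a rational $q_c\in[0,c]$ with $q_c\notin G$, and necessarily $q_c\ge\gamma$ since $\mathbb{Q}\cap[0,\gamma)\subseteq G$; letting $c\downarrow\gamma$ we obtain rationals $q_k\to\gamma$ with $q_k\notin G$ and $v(q_k)\to v(\gamma)$.

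Finally I would carry out the case analysis on the non‑empty set $S:=\{j\mid v(\gamma)\in\mathcal{T}_j\}$, which contains the index $i$ above. If some $j\in S$ has $T_\beta^j(1)=\gamma$, we are in case~(i). If some $j\in S$ has $T_\beta^j(1)<\gamma$ then, since we may assume $\gamma<T_\beta^i(1)$ (the alternative $\gamma=T_\beta^i(1)$ being case~(i) again), the point $v(\gamma)=(-\gamma,\dots,-\gamma)$ lies in $\mathcal{T}_i\cap\mathcal{T}_j$ with $T_\beta^j(1)<\gamma<T_\beta^i(1)$, which is case~(ii). In the only remaining situation, $T_\beta^j(1)>\gamma$ for all $j\in S$: then for $k$ large and every $j\in S$ we have $q_k<T_\beta^j(1)$, so $q_k\ge0$ together with $q_k\notin G$ forces $v(q_k)\notin\mathcal{T}_j$; and for $j\notin S$ the closed set $\mathcal{T}_j$ misses $v(\gamma)$, hence misses $v(q_k)$ for $k$ large. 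Thus $v(q_k)\notin\bigcup_j\mathcal{T}_j=\mathcal{T}$ for all large $k$, so $v(\gamma)=\lim_k v(q_k)$ lies in the closure of $(\mathbb{R}^{r-1}\times\mathbb{C}^s)\setminus\mathcal{T}$; combined with $v(\gamma)\in\mathcal{T}_i\subseteq\mathcal{T}$ this gives $v(\gamma)\in\partial\mathcal{T}$, which is case~(iii).

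The whole proof is essentially bookkeeping with the definition of the supremum; the only point needing care is the exhaustiveness of the three cases, i.e.\ that the last situation really forces $v(\gamma)\in\partial\mathcal{T}$. This uses the compactness of every $\mathcal{T}_j$, the identity $\bigcup_j\mathcal{T}_j=\mathcal{T}$, and the fact that $q_k\notin G$ controls exactly the subtiles $\mathcal{T}_j$ with $q_k<T_\beta^j(1)$; note that Proposition~BS is not needed here, although it is indispensable for the later applications.
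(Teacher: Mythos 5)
Your argument is correct, and it follows the same route the paper intends: the paper states the corollary as an immediate consequence of Theorem~IR, and your proof is exactly that deduction, carried out carefully via the identification $-\Xi(r)=(-r,\dots,-r)$ for rational $r$ and the supremum definition of $\gamma(\beta)$, using only the closedness of the subtiles and $\bigcup_i\mathcal{T}_i=\mathcal{T}$. Your remark that Proposition~BS is not needed at this stage is also accurate.
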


\section{Some results on $\Xi(\beta)$-representations}

In this section, we consider representations of points in the space $\mathbb{R}^{r-1} \times \mathbb{C}^s$ that involve some Galois conjugations related to~$\beta$. 
Such representations are termed $\Xi(\beta)$-representations. 
If $x=(x_1,\ldots,x_{r+s-1)}$ and $=(y_1,\ldots,y_{r+s-1})$ are two elements in $\mathbb{R}^{r-1} \times \mathbb{C}^s$, we set $x \odot z := (x_1 y_1,\ldots,x_{r+s-1}y_{r+s-1})$. 
From now on, we say that a pointed bi-infinite word $\cdots a_{-1}a_0\biinf a_1 a_2\cdots$ is admissible if $\cdots a_{-1}a_0 a_1 a_2\cdots$ is a bi-infinite admissible word.

\begin{definition}
A~\emph{$\Xi(\beta)$-representation} of $z\in\mathbb{R}^{r-1}\times\mathbb{C}^s$ is an admissible bi-infinite word $\cdots a_{-1}a_0\biinf a_1 a_2\cdots$ with $\decdot a_1 a_2\cdots\in\mathbb{Z}[\beta]$ and 
such that 
$$
z=\sum_{j=0}^\infty a_{-j}\Xi(\beta^j)+\Xi(\decdot a_1 a_2\cdots).
$$
\end{definition}

We derive now several results about $\Xi(\beta)$-representations that will be useful in the sequel. 
First, we show that such representations do exist.

\begin{lemma}\label{lem1}
Every $z\in\mathbb{R}^{r-1}\times\mathbb{C}^s$ has at least one $\Xi(\beta)$-representation.
\end{lemma}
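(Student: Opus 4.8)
The plan is to construct a $\Xi(\beta)$-representation of an arbitrary $z\in\mathbb{R}^{r-1}\times\mathbb{C}^s$ by using the covering property~\eqref{eq:covering}, then unfolding the self-affine structure of the tiles to produce the digits one at a time. First I would invoke~\eqref{eq:covering} to find some $y\in\mathbb{Z}[\beta]\cap[0,1)$ with $z\in\mathcal{T}(y)$; by definition of $\mathcal{T}(y)$ this means $z=\Xi(y)+\lim_k \Xi(a^{(k)}_{-\ell_k+1}\cdots a^{(k)}_0\decdot)$ for admissible words $a^{(k)}_{-\ell_k+1}\cdots a^{(k)}_0\, d_\beta(y)$. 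The key algebraic identity is that $\Xi$ is a ring homomorphism on $\mathbb{Q}(\beta)$, so $\Xi(\beta)$ acts on these representations by shifting digits past the radix point. Concretely, the digits $\decdot a_1 a_2\cdots = d_\beta(y)$ supply the right-infinite (``fractional'') part, and I must produce a left-infinite admissible word $\cdots a_{-1}a_0$ with $\sum_{j\ge 0} a_{-j}\Xi(\beta^j) = z-\Xi(y)$.

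The main step is a compactness/diagonal argument to extract a single left-infinite admissible word whose partial sums converge to $z-\Xi(y)$. For each $k$, the finite word $a^{(k)}_{-\ell_k+1}\cdots a^{(k)}_0$, padded on the left with zeros, gives $\sum_{j\ge 0} a^{(k)}_{-j}\Xi(\beta^j)$, and these approximate $z-\Xi(y)$. Since the alphabet $\mathcal{A}_\beta$ is finite, the sequences $(a^{(k)}_{-j})_{j\ge0}$ (indexed by $k$) lie in a compact space $\mathcal{A}_\beta^{\mathbb{N}}$, so a subsequence converges coordinatewise to some left-infinite word $\cdots a_{-1}a_0$; admissibility is a closed condition (it is defined by lexicographic inequalities on all suffixes), so the limit word is admissible, and together with $d_\beta(y)$ it forms an admissible bi-infinite word. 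The remaining point is to check that $\sum_{j\ge0} a_{-j}\Xi(\beta^j)=z-\Xi(y)$, i.e.\ that the limit of the series matches the limit of the approximations. Here one needs the conjugates of $\beta$ to have modulus $<1$: the map $v\mapsto\sum_j v_{-j}\Xi(\beta^j)$ is continuous from $\mathcal{A}_\beta^{\mathbb{N}}$ (product topology) to $\mathbb{R}^{r-1}\times\mathbb{C}^s$ because $\|\Xi(\beta^j)\|$ decays geometrically (each coordinate is $|\sigma_i(\beta)|^j$), so a change in only finitely many digits changes the sum by a controlled amount, and convergence coordinatewise yields convergence of the sums.

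I expect the main obstacle to be bookkeeping rather than a deep difficulty: one must be careful that $z-\Xi(y)$ genuinely lies in the closure described by $\mathcal{T}(y)-\Xi(y)$ (this is exactly the definition, so it is immediate), and that the diagonal extraction preserves both the convergence of the geometric series and admissibility simultaneously. A mild subtlety is that the approximating words have varying lengths $\ell_k$, but padding with leading zeros makes them all elements of the same sequence space, and leading zeros do not affect admissibility. Once the left-infinite word $\cdots a_{-1}a_0$ is obtained with $\sum_{j\ge0}a_{-j}\Xi(\beta^j)=z-\Xi(y)$, setting $\decdot a_1a_2\cdots := d_\beta(y)$ (which lies in $\mathbb{Z}[\beta]\cap[0,1)$ as required) and using $\Xi(\decdot a_1a_2\cdots)=\Xi(y)$ gives $z=\sum_{j\ge0}a_{-j}\Xi(\beta^j)+\Xi(\decdot a_1a_2\cdots)$, which is the desired $\Xi(\beta)$-representation. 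This completes the construction, so every $z$ has at least one such representation.
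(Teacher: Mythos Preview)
Your proposal is correct and follows essentially the same approach as the paper: invoke the covering~\eqref{eq:covering} to land in some $\mathcal{T}(y)$, then extract a left-infinite admissible word from the approximating finite words and use geometric decay of $\Xi(\beta^j)$ to identify the limit. The only cosmetic difference is that the paper carries out the diagonal extraction by hand (pigeonhole on the last letter, then the last-but-one, etc.) whereas you phrase it as sequential compactness of $\mathcal{A}_\beta^{\mathbb{N}}$ together with continuity of the evaluation map; these are the same argument.
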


\begin{proof}
As we already mentioned, we have 
$$
\bigcup_{y\in\mathbb{Z}[\beta]\cap[0,1)}\mathcal{T}(y) = \mathbb{R}^{r-1}\times\mathbb{C}^s.
$$
Thus every $z\in\mathbb{R}^{r-1}\times\mathbb{C}^s$ belongs to some tile $\mathcal{T}(y)$ with $y\in \mathbb{Z}[\beta]$. 
By the definition of $\mathcal{T}(y)$, there exists a sequence of finite words $W_k$ such that $W_k d_{\beta}(y)$ is an admissible infinite word and
$$
\lim_{k\to\infty} \Xi(W_k\decdot) = z - \Xi(y).
$$
Now, note that there exist infinitely many $W_k$ that end with the same letter, say~$a_0$. 
Among them, there are infinitely many of them with the same last but one letter, say~$a_{-1}$. 
Keeping on this procedure, we deduce the existence of a left  infinite word $\cdots a_{-1}a_0$ such that $\cdots a_{-1}a_0 \biinf  d_{\beta}(y)$ is a bi-infinite admissible word and 
$$
z - \Xi(y) = \lim_{k\to\infty} \Xi(a_{k+1}\cdots a_0\decdot).
$$
Thus, we have $z = \sum_{j=0}^{+\infty} a_{-j} \Xi(\beta^j)  + \Xi(y)$, which proved that $\cdots a_{-1}a_0\biinf d_{\beta}(y)$ is a $\Xi(\beta)$-representation of $z$ since $y$ belongs to $\mathbb{Z}[\beta]$. 
This ends the proof.
\end{proof} 

The following results were shown by Sadahiro~\cite{Sadahiro06} for cubic Pisot units $\beta$ satisfying (F) with a single pair of complex Galois conjugates, \emph{i.e.}\ $r=s=1$.

\begin{lemma}\label{lemC}
There exists a positive integer $C$ such that every $z\in\mathbb{R}^{r-1}\times\mathbb{C}^s$ has at most $C$ different $\Xi(\beta)$-representations.
\end{lemma}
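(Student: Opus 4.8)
The plan is to bound the number of $\Xi(\beta)$-representations of a point $z$ by relating each representation to a tile containing $z$ (or a uniformly bounded translate of $z$), and then to invoke Fact~\ref{fact}. First I would take a $\Xi(\beta)$-representation $\cdots a_{-1}a_0\biinf a_1a_2\cdots$ of $z$. Since the bi-infinite word is admissible, every suffix $a_{k}a_{k+1}\cdots$ is admissible, and in particular $a_1a_2\cdots$ is an admissible right-infinite word, so $y:=\decdot a_1a_2\cdots$ lies in $[0,1)$; by hypothesis it lies in $\mathbb{Z}[\beta]$, hence $y\in\mathbb{Z}[\beta]\cap[0,1)$. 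Moreover, for every $k\le 0$, the finite word $a_{k}\cdots a_0$ concatenated with $d_\beta(y)=a_1a_2\cdots$ is admissible (again because it is a suffix of the admissible bi-infinite word, after noting $a_1a_2\cdots=d_\beta(y)$ since $y$ has an admissible expansion), so $\Xi(a_k\cdots a_0\decdot)+\Xi(y)$ belongs to $\mathcal{T}(y)$ by definition of the tile. Letting $k\to-\infty$, the partial sums $\sum_{j=0}^{-k}a_{-j}\Xi(\beta^j)+\Xi(y)$ converge to $z$ (the series converges because the $\sigma_\ell(\beta)$ have modulus $<1$ and the digits are bounded), and since $\mathcal{T}(y)$ is closed we conclude $z\in\mathcal{T}(y)$.

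Next I would observe that distinct $\Xi(\beta)$-representations can a priori yield the same $y$: the map from representations to tiles is not injective. So the bound from Fact~\ref{fact} on the number of tiles containing $z$ must be combined with a bound on the number of representations sharing a fixed value of $y$. For a fixed admissible right-infinite tail $d_\beta(y)=a_1a_2\cdots$, a $\Xi(\beta)$-representation of $z$ is determined by a left-infinite admissible word $\cdots a_{-1}a_0$ (admissibly concatenable with $d_\beta(y)$) satisfying $\sum_{j\ge0}a_{-j}\Xi(\beta^j)=z-\Xi(y)$. The key step is to show this has boundedly many solutions. I would argue by a pigeonhole/finiteness argument on the automaton recognizing admissible words: if two such left-infinite words $\cdots a_{-1}a_0$ and $\cdots a'_{-1}a'_0$ give the same value $z-\Xi(y)$, their difference sequence $(a_{-j}-a'_{-j})_{j\ge0}$ satisfies $\sum_{j\ge0}(a_{-j}-a'_{-j})\Xi(\beta^j)=0$, i.e.\ $\sum_{j\ge0}(a_{-j}-a'_{-j})\sigma_\ell(\beta)^j=0$ for all embeddings $\sigma_\ell$. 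Since also $\sum_{j\ge0}(a_{-j}-a'_{-j})\beta^{-j}$ represents a bounded quantity and the digit differences are bounded integers, one can pass to the Galois conjugate in $\mathbb{Q}(\beta)$ and deduce that the two left-infinite words cannot differ too far back: more precisely, only finitely many bounded-digit left-infinite sequences can have $\Xi$-value equal to any fixed target, and the number of them is bounded independently of the target because the relevant set $\{\sum_{j\ge0}b_{-j}\Xi(\beta^j): b_{-j}\in\{-(\lceil\beta\rceil-1),\dots,\lceil\beta\rceil-1\}\}$ is uniformly discrete away from the finitely-many collision patterns — equivalently, one uses that two admissible left-infinite words with the same $\Xi$-image must be cofinal up to a bounded window, which follows from $\beta$ being a Pisot unit (the conjugate contraction forces agreement of tails, and admissibility constrains the finitely-many initial disagreements).

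Putting this together: the number of $\Xi(\beta)$-representations of $z$ is at most (number of tiles $\mathcal{T}(y)$ containing $z$) times (maximal number of left-infinite admissible prefixes compatible with a fixed $d_\beta(y)$ and a fixed $\Xi$-value), and both factors are bounded by absolute constants — the first by Fact~\ref{fact}, the second by the uniform-discreteness/cofinality argument just sketched. Taking $C$ to be the product gives the lemma. The main obstacle I anticipate is making the second bound rigorous and genuinely uniform: one must rule out that, for cleverly chosen $z$ and $y$, arbitrarily many admissible left-infinite words happen to share the same $\Xi$-value. I would handle this by exploiting that $\sum_{j\ge0}b_{-j}\Xi(\beta^j)=0$ with bounded integer $b_{-j}$ forces, via the defining polynomial relation of $\beta$ and its conjugates, a finite-state linear recurrence on $(b_{-j})$; the admissibility constraints then pin down the sequence from a bounded suffix, yielding the required uniform count. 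This is precisely where Sadahiro's original argument for the $r=s=1$ case is generalized, and the bookkeeping over possibly several real and one complex pair of conjugates is the delicate part, but it is routine given that $\beta$ is a Pisot unit so that $\mathbb{Z}[1/\beta]=\mathbb{Z}[\beta]$ and all non-dominant conjugates contract.
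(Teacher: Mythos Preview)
Your first paragraph is fine: each $\Xi(\beta)$-representation of $z$ does yield a tile $\mathcal{T}(y)$ containing $z$, with $y=\decdot a_1a_2\cdots$. The gap is in your ``second bound''. The cofinality claim you rely on --- that two admissible left-infinite words with the same $\Xi$-image must agree outside a bounded window --- is false. If $\cdots a_{-1}a_0$ and $\cdots a'_{-1}a'_0$ are two distinct admissible left-infinite words with the same $\Xi$-value, then for any digit $c$ such that the concatenations remain admissible, $\cdots a_{-1}a_0c$ and $\cdots a'_{-1}a'_0c$ again have equal $\Xi$-value (it is $c\cdot(1,\ldots,1)+\Xi(\beta)\odot(\text{old value})$). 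Iterating, the disagreement can be pushed arbitrarily far to the left, so there is no bounded window. Your difference-sequence argument does not help either: the relation $\sum_{j\ge0}b_{-j}\Xi(\beta^j)=0$ gives equations only in the \emph{contracting} conjugates, and the formal series $\sum_{j\ge0}b_{-j}\beta^{j}$ does not converge, so ``passing to the Galois conjugate in $\mathbb{Q}(\beta)$'' has no meaning here. In short, the quantity you try to bound --- the number of admissible left-infinite words with a prescribed $\Xi$-value --- is essentially the statement of the lemma itself (for representations with tail $0^\omega$), and your sketch does not establish it independently.

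The paper sidesteps this entirely with a shifting trick. Suppose $z$ had $C+1$ distinct $\Xi(\beta)$-representations, where $C$ is the constant from Fact~\ref{fact}. Choose $k$ large enough that the $C+1$ right-infinite words $a_{-k+1}^{(j)}a_{-k+2}^{(j)}\cdots$ are pairwise distinct (possible since the bi-infinite words are distinct). Then $\cdots a_{-k-1}^{(j)}a_{-k}^{(j)}\biinf a_{-k+1}^{(j)}a_{-k+2}^{(j)}\cdots$ are $C+1$ $\Xi(\beta)$-representations of the single point $\Xi(\beta^k)\odot z$, now with pairwise distinct tails $y_j=\decdot a_{-k+1}^{(j)}a_{-k+2}^{(j)}\cdots\in\mathbb{Z}[\beta]\cap[0,1)$ (here one uses $\mathbb{Z}[1/\beta]=\mathbb{Z}[\beta]$). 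Hence $\Xi(\beta^k)\odot z$ lies in $C+1$ distinct tiles $\mathcal{T}(y_j)$, contradicting Fact~\ref{fact}. No second bound is needed: by moving the decimal point, the non-injectivity you worried about is absorbed into the choice of $y$.
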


\begin{proof} 
We already observed in Fact~\ref{fact} that there exists a positive integer, say $C$, such that every $z\in\mathbb{R}^{r-1}\times\mathbb{C}^s$ is contained in at most $C$ different tiles $\mathcal{T}(y)$. 
Let $z\in\mathbb{R}^{r-1}\times\mathbb{C}^s$ and let us assume that $z$ has more than $C$ different $\Xi(\beta)$-representations, namely
$$
(\cdots a_{-1}^{(j)}a_0^{(j)}\biinf a_1^{(j)}a_2^{(j)}\cdots)_{1\leq j \leq C+1}.
$$ 
Then, there exists some non-negative integer $k$ such that the infinite sequences $a_{-k+1}^{(j)}a_{-k+2}^{(j)}\cdots$, $1\leq j \leq C+1$, are all distinct. 
This implies that $\Xi(\beta^k)\odot z$ belongs to each tile $\mathcal{T}({y_j})$ with $y_j = \decdot a_{-k+1}^{(j)}a_{-k+2}^{(j)}\cdots$. 
Consequently, $\Xi(\beta^k)\odot z$ lies in more than $C$ different tiles, which contradicts the definition of $C$.
\end{proof}

\begin{lemma}\label{lemper}
Let $\cdots a_{-1}a_0\biinf a_1 a_2\cdots$ be a $\Xi(\beta)$-representation of $\Xi(x)$ for some $x\in\mathbb{Q}(\beta)$. 
Then, the left infinite word $\cdots a_{-1}a_0$ is eventually periodic. 
\end{lemma}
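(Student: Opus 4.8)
The plan is to exploit the finitely many possible "left tails" of $\Xi(\beta)$-representations together with a pigeonhole argument, in the spirit of the proof of Lemma~\ref{lemC}. Suppose $\cdots a_{-1}a_0\biinf a_1a_2\cdots$ is a $\Xi(\beta)$-representation of $\Xi(x)$ with $x\in\mathbb{Q}(\beta)$. For each integer $k\ge 0$ set
$$
z_k := \Xi(\beta^k)\odot\Xi(x) - \sum_{j=0}^{k-1} a_{-j}\,\Xi(\beta^{k-1-j}),
$$
so that $z_k$ is precisely the value $\Xi$-represented by the shifted bi-infinite word $\cdots a_{-k-1}a_{-k}\biinf a_{-k+1}\cdots a_0 a_1 a_2\cdots$; in other words shifting the base point to the right by one position sends $z_k$ to $z_{k+1}$, and $z_{k+1}$ depends on $z_k$ and on the single digit $a_{-k}$ via an affine map with linear part $\Xi(\beta)\odot(\cdot)$. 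The first key point is that all the $z_k$ lie in a \emph{bounded} region of $\mathbb{R}^{r-1}\times\mathbb{C}^s$: since $\cdots a_{-k+1}\cdots a_0 a_1\cdots$ is admissible and $\decdot a_{-k+1}\cdots\in\mathbb{Z}[\beta]\cap[0,1)$, the point $z_k$ lies in the tile $\mathcal{T}(\decdot a_{-k+1}\cdots)$, hence in $\Xi(\decdot a_{-k+1}\cdots)+\mathcal{T}\subseteq[0,1)^{\,r-1}\times(\text{bounded})\ +\ \mathcal{T}$, which is bounded because $\mathcal{T}$ is compact and the images $\sigma_\ell(y)$ of $y\in[0,1)$ stay bounded (the Galois conjugates of a number in $[0,1)$ have bounded modulus? — this is exactly where Pisot-unit hypotheses and compactness of $\mathcal T$ are used).

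The second key point is an arithmetic one: each $z_k$ lies in $\Xi(\mathbb{Q}(\beta))$, and in fact in $\Xi$ applied to the fractional ideal generated by $x$ and $1$ over $\mathbb{Z}[\beta]$, which is a finitely generated $\mathbb{Z}$-module of rank $r+2s-1 = [\mathbb{Q}(\beta):\mathbb{Q}]$; its image under $\Xi$ is a lattice in $\mathbb{R}^{r-1}\times\mathbb{C}^s$. Indeed $z_k = \Xi(w_k)$ where $w_k := \beta^k x - (a_{-k+1}\beta^{k-1}+\cdots+a_0)$ belongs to $\frac{1}{q}\mathbb{Z}[\beta]$ if $x=p/q$ with $p\in\mathbb{Z}[\beta]$, $q\in\mathbb{Z}$; since $\Xi$ is injective on $\mathbb{Q}(\beta)$ and $\Xi\bigl(\frac1q\mathbb{Z}[\beta]\bigr)$ is a lattice, a bounded subset of it is finite. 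Therefore the sequence $(z_k)_{k\ge0}$ takes only finitely many values.

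Now conclude by pigeonhole: there are indices $k<\ell$ with $z_k=z_\ell$. I claim this forces $a_{-k-i}=a_{-\ell-i}$ for all $i\ge 1$, i.e.\ periodicity of $\cdots a_{-1}a_0$ with period $\ell-k$ (eventually, from position $-k$ on). Here the second-to-last step is to note that the left-infinite digit string $\cdots a_{-k-1}a_{-k}$ is determined by $z_k$: this is essentially the statement that the digits of the $\Xi(\beta)$-representation obtained by the greedy/tile construction are forced, which follows because knowing $z_k$ pins down which tile $\Xi(\beta)\odot(\text{shift})$ falls into and hence pins down $a_{-k}$, then $a_{-k-1}$, etc.\ — more carefully, one should argue that \emph{any} $\Xi(\beta)$-representation of $z_k$ agreeing on the right part must, after passing to a common refinement as in the proof of Lemma~\ref{lemC}, eventually coincide, using that $z$ lies in only $C$ tiles. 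The main obstacle is precisely this last uniqueness/rigidity step: unlike ordinary $\beta$-expansions the left tail need not be unique, so one cannot simply say "$z_k=z_\ell$ implies the tails are equal." The clean fix is to observe that since there are only finitely many values of $z_k$ and for each value only finitely many ($\le C$) admissible left tails producing it, the pair (value of $z_k$, left tail from position $-k$) ranges over a finite set; picking $k<\ell$ with the same pair gives genuine eventual periodicity. This finite-memory argument, combined with boundedness (compactness of $\mathcal T$) and discreteness (the lattice $\Xi(\frac1q\mathbb Z[\beta])$), yields the claim.
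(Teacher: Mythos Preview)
Your overall strategy---define shifted quantities $z_k$, show they take only finitely many values, then use Lemma~\ref{lemC} to conclude that the set of left tails $\{\cdots a_{-k-1}a_{-k}\mid k\ge0\}$ is finite---is exactly the paper's approach, and your final pigeonhole step is correct. However, the discreteness argument in your ``second key point'' contains a genuine error that breaks the proof.

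You claim that $\Xi\bigl(\tfrac{1}{q}\mathbb{Z}[\beta]\bigr)$ is a lattice in $\mathbb{R}^{r-1}\times\mathbb{C}^s$. This is false. With the paper's conventions there are $r$ real embeddings (including the identity) and $s$ complex pairs, so $[\mathbb{Q}(\beta):\mathbb{Q}]=r+2s$, and $\tfrac{1}{q}\mathbb{Z}[\beta]$ is a free $\mathbb{Z}$-module of rank $r+2s$, not $r+2s-1$. The map $\Xi$ omits the identity embedding and lands in a real vector space of dimension $(r-1)+2s=r+2s-1$; the image of a rank-$(r+2s)$ module in a $(r+2s-1)$-dimensional space is \emph{dense}, not discrete. (Indeed, the density of $\Xi(\mathbb{Z}[\beta]\cap[0,1))$ is precisely what underlies the covering~(\ref{eq:covering}).) Consequently ``bounded in $\Xi$'' does not give finiteness, and your argument stalls here. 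Your hesitant parenthetical about Galois conjugates of numbers in $[0,1)$ being bounded is a symptom of the same missing embedding.

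The fix, which is what the paper does, is to work with an element of $\mathbb{Q}(\beta)$ rather than a point of $\mathbb{R}^{r-1}\times\mathbb{C}^s$, and bound it in \emph{all} archimedean places, including the identity. Setting $x_k:=a_{-k+1}\cdots a_0\decdot a_1a_2\cdots$ and $z_k:=\beta^{-k}(x-x_k)\in\mathbb{Q}(\beta)$, one has $\Xi(z_k)=\sum_{j\ge0}a_{-k-j}\Xi(\beta^j)$, which is bounded because $\beta$ is Pisot, and the recursion $z_{k+1}=\beta^{-1}(z_k-a_{-k})$ bounds $z_k$ in~$\mathbb{R}$. Bounded height plus bounded degree then forces $\{z_k\}$ to be finite, after which your use of Lemma~\ref{lemC} finishes the proof exactly as in the paper. (As a minor aside, your formula for $z_k$ does not actually match the ``shifted'' value you describe in words; the shifted value is $\Xi(\beta^{-k}x)$, and your $w_k$ is a different quantity which happens to be bounded under $\Xi$ but is not obviously bounded in~$\mathbb{R}$.)
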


\begin{proof}
Let $\cdots a_{-1}a_0\biinf a_1 a_2\cdots$ be a $\Xi(\beta)$-representation of $\Xi(x)$ for some $x\in\mathbb{Q}(\beta)$. 
Set $x_k := a_{-k+1}\cdots a_0\decdot a_1 a_2\cdots$ and for every non-negative integer $k$ set $z_k :=\beta^{-k}(x-x_k)$. 
Then, we have 
$$
z_{k+1}=\frac{1}{\beta}(z_k-a_{-k}),
$$ 
which implies that the set $\{z_k \mid k\ge0\}$ is bounded.
Furthermore, we have 
$$
\Xi(z_k)=\sum_{j=0}^\infty a_{-k-j}\Xi(\beta^j).
$$ 
Since $\beta$ is a Pisot number, this implies that all conjugates of $z_k$ are bounded as well. 
Since all the $z_k$ have a degree bounded by the degree of $\beta$, this implies that $\{z_k \mid k\ge0\}$ is a finite set.

Now, observe that $\cdots a_{-k-1}a_{-k}\biinf 0^\omega$ is a $\Xi(\beta)$-representation of $\Xi(z_k)$. 
By Lemma~\ref{lemC}, each $\Xi(z_k)$ has at most $C$ different $\Xi(\beta)$-representations.  
Since there are only finitely many different $z_k$, the set of left infinite words $\{\cdots a_{-k-1}a_{-k} \mid  k\geq 0\}$ is finite.  
This implies that the left infinite word $\cdots a_{-1}a_0$ is eventually periodic, concluding the proof.
\end{proof}

\section{Spiral points}

In this section, we introduce a topological and geometrical notion  for compact subsets of the complex plane.  
This notion of \emph{spiral point} turns out to be the key tool 
for proving Theorems~\ref{th1} and~\ref{th2}. 
Roughly, $x$ is a spiral point with respect to a compact set $X\subset \mathbb{C}$ when both the interior and the complement of $X$ turn around~$x$, meaning that they meet
infinitely many times all rays of positive length issued from~$x$. More formally, we have the following definition.

\begin{definition}
Let $X$ be a compact subset of~$\mathbb{C}$. 
A point $z\in X$ is a \emph{spiral point} with respect to $X$ if for every positive real numbers $\varepsilon$ and $\theta$, both the interior of $X$ and the complement of $X$ meet the 
ray $z+[0,\varepsilon)e^{i\theta} := \left\{ z+\rho e^{i\theta} \mid \rho \in [0,\varepsilon)\right\}$.
\end{definition}

\begin{figure}[ht]
\centerline{\includegraphics{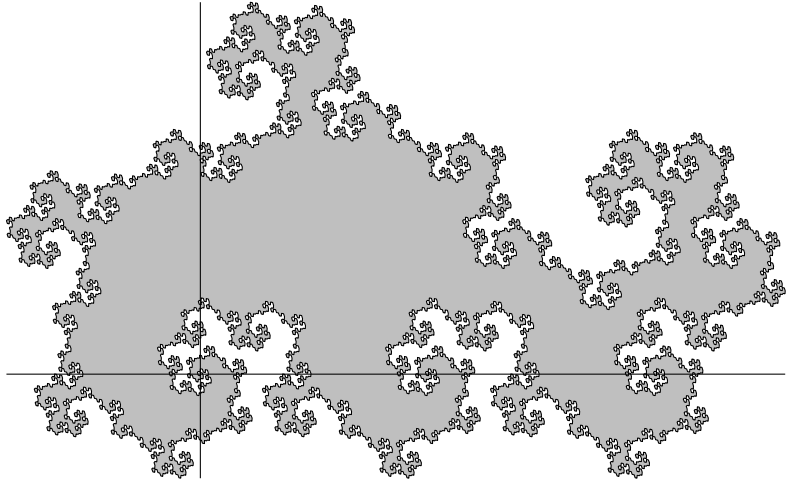}}
\caption{The central tile associated with the real root of $x^3-3x^2+2x-1$.}
\end{figure}

\medskip

It seems that the boundary of many fractal objects in the complex plane contains some spiral points, but we were not able to find a reference for this notion. 
The most common property studied in fractal geometry that is related to our notion of spiral point seems to be the non-existence of weak tangent (see for instance~\cite{Falconer}).  
For instance, if $X$ denotes a set with a non-integer Hausdorff dimension lying between $1$ and~$2$, then almost all points of $X$ do not have a weak tangent. 
This result applies in particular to the boundary of some classical fractal structures such as Julia sets and Heighway dragon.

\medskip

Our key result now reads as follows.

\begin{proposition}\label{prop:spiral}
Let $\beta$ be a cubic Pisot number with a complex Galois conjugate~$\alpha$.
Then every point in $\mathbb{Q}(\alpha)$ that belongs to   
the boundary of $\mathcal{T}$ or of a subtile $\mathcal{T}_i$ is a spiral point with respect to this tile.
\end{proposition}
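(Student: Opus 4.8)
The plan is to exploit the self-affine structure of the tiles together with the hypothesis that the point $z$ lies in $\mathbb{Q}(\alpha)$, where $\alpha$ is the complex Galois conjugate of $\beta$. Since $\beta$ is a cubic Pisot unit with a complex conjugate, we have $r=s=1$, so the ambient space is $\mathbb{R}^{r-1}\times\mathbb{C}^s=\mathbb{C}$, the map $\Xi$ sends $x\in\mathbb{Q}(\beta)$ to $\sigma(x)=\alpha\text{-conjugate}$, and multiplication by $\Xi(\beta)=\alpha$ is the contraction governing the subdivision equations of $\mathcal{T}$ and the $\mathcal{T}_i$. The first step is to record the set equations: each subtile $\mathcal{T}_i$ is a finite union of affine copies $\alpha\,\mathcal{T}_j+\Xi(d)$ of other subtiles (these come from the prefix-refinement of $\mathrm{Int}(\beta)$, as in \cite{BS2,BSJNT}); hence the boundary $\partial\mathcal{T}_i$ (and likewise $\partial\mathcal{T}$) is mapped into a finite union of scaled-and-translated boundaries. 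Because $z\in\mathbb{Q}(\alpha)$ and these translations lie in $\mathbb{Z}[\alpha]$, the orbit of $z$ under the (finitely many) inverse branches $w\mapsto (w-\Xi(d))/\alpha$ stays inside a bounded subset of $\mathbb{Q}(\alpha)$ of bounded denominator, hence is finite — this is exactly the pigeonhole mechanism already used in Lemma~\ref{lemper}. So after finitely many steps we return to a point $z'$ in the same orbit with $z'$ on the boundary of some tile and $z'-z = \alpha^{-N}(\text{something})$, i.e.\ the local picture of the boundary near $z$ is, up to the linear map $\alpha^{N}$ (a rotation by $N\arg\alpha$ composed with a dilation), a copy of the local picture near $z'=z$ itself.

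The second step turns this eventual self-similarity into the spiral property. Fix $\varepsilon,\theta>0$; we must show the ray $z+[0,\varepsilon)e^{i\theta}$ meets both $\mathring{\mathcal{T}_i}$ and its complement. Since $\alpha\notin\mathbb{R}$, the argument $\arg\alpha$ is not a rational multiple of $\pi$ — here one uses that $\beta$ is a Pisot \emph{unit} of degree $3$ with complex conjugates, so $\alpha\bar\alpha = 1/\beta$ is not a root of unity times anything forcing $\arg\alpha\in\pi\mathbb{Q}$; more directly, if $\arg\alpha\in\pi\mathbb{Q}$ then some power $\alpha^k$ is real, forcing $\beta^k$ and its conjugates to satisfy a relation incompatible with $\beta$ being a cubic Pisot number. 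Consequently the iterates of the rotation by $\arg\alpha$ are equidistributed on the circle, so for every target direction $\theta$ there are infinitely many $k$ with $k\arg\alpha$ arbitrarily close to $\theta-\theta_0$, where $\theta_0$ is a fixed direction in which we can guarantee the local structure meets $\mathring{\mathcal{T}_i}$. That such a $\theta_0$ exists — that near the boundary point $z$ the interior of the tile genuinely occupies some angular sector, and the complement another — is the third step: it follows because $z\in\partial\mathcal{T}_i$ means every neighbourhood of $z$ meets both $\mathring{\mathcal{T}_i}$ (by Proposition~BS and the fact that $\mathcal{T}_i$ has nonempty interior, being a set of positive Lebesgue measure by Theorem~ARS) and the complement of $\mathcal{T}_i$. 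Pulling back through the self-affine equation by $\alpha^{-k}$ rescales a fixed small neighbourhood of $z$ down to an arbitrarily small neighbourhood while rotating it by $-k\arg\alpha$; choosing $k$ so that the rotation aligns the witnessing points with the prescribed direction $e^{i\theta}$ places interior points and complement points of $\mathcal{T}_i$ arbitrarily close to $z$ along (a direction arbitrarily close to) that ray. A short compactness/openness argument — interior points survive small perturbations of the direction, and so do complement points of the compact set $\mathcal{T}_i$ — upgrades ``arbitrarily close to the ray'' to ``on the ray,'' for $\varepsilon$ as small as we like.

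The main obstacle I expect is making the third step — the existence of a witnessing direction $\theta_0$ in which the interior (and separately the complement) of $\mathcal{T}_i$ approaches $z$ — completely rigorous, because a priori $z$ could be a very degenerate boundary point and one must rule out that, say, the interior only approaches $z$ through a sequence of directions tending to a single limit that the rotation orbit might avoid. The clean way around this is to first apply the orbit-finiteness of Step~1 to reduce to the case where $z$ is a \emph{periodic} point of the inverse-branch dynamics (replace $z$ by a point in its orbit that recurs), so that the neighbourhood of $z$ is \emph{exactly} similar to itself under $\alpha^{p}$ for some period $p$; then the set of directions from $z$ along which the interior of $\mathcal{T}_i$ accumulates is invariant under rotation by $p\arg\alpha$, hence — being nonempty and (one checks) having nonempty interior as a subset of the circle, using again that $\mathcal{T}_i$ has interior and $z$ is on its boundary — this rotation-invariant set of directions, for an irrational rotation, must be dense, and the same for the complement. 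Combined with the openness remarks this yields the spiral property for $\mathcal{T}_i$, and the identical argument with $\mathcal{T}$ in place of $\mathcal{T}_i$ finishes the proof.
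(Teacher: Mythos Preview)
Your proposal shares the two essential ingredients with the paper's proof: eventual periodicity of the $\alpha$-representation of $z$ (your orbit-finiteness under inverse branches is exactly the mechanism of Lemma~\ref{lemper}), and irrationality of $\arg\alpha/\pi$ so that the contraction $\alpha^p$ rotates through a dense set of angles. Where you diverge is in how you produce interior (resp.\ complement) points along a prescribed ray. You argue abstractly: the set of directions along which $\mathring{\mathcal{T}_i}$ accumulates at $z$ is nonempty, invariant under rotation by $p\arg\alpha$, hence dense, and then you invoke openness to upgrade to every ray. The paper instead constructs the witnesses explicitly: from the eventually periodic representation $\leftexp{\omega}{(}a_{-q-p+1}\cdots a_{-q})a_{-q+1}\cdots a_0\biinf 0^{\omega}$ it forms the truncations $z_k$ and observes that one can prepend $0^\ell$ followed by \emph{any} admissible left-infinite word, yielding the inclusion $z_k+\alpha^{q+kp+\ell}\mathcal{T}\subseteq\mathcal{T}$ (and similarly inside $\mathcal{T}_j$). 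Since $\mathcal{T}$ contains a fixed ball $\mathcal{B}$, this places entire balls $z_k+\alpha^{kp}\mathcal{B}'$ in the interior at distance $\asymp|\alpha|^{kp}$ from $z$ and at angle shifted by $kp\arg\alpha$; density of the rotation then hits every ray. For the complement, the paper uses that $z\in\partial\mathcal{T}$ forces $z\in\mathcal{T}(y)$ for some $y\ne0$ by the covering, runs the same ball construction inside $\mathcal{T}(y)$, and invokes Theorem~ARS to conclude those balls lie outside~$\mathcal{T}$.

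The advantage of the paper's route is that it completely bypasses the obstacle you yourself flag: you never need to analyse the ``set of accumulation directions'' or worry whether it has nonempty interior on the circle, because you have a whole ball (an open angular sector of definite width) at every scale. Your proposed fix --- reducing to exact self-similarity at a periodic point and then using rotation-invariance --- is plausible, but the step ``dense set of directions $\Rightarrow$ every ray'' is not fully justified by the openness remark alone (each $A_\varepsilon=\{\theta: z+[0,\varepsilon)e^{i\theta}\text{ meets }\mathring{\mathcal{T}_i}\}$ is open, but you need $\bigcap_{\varepsilon>0}A_\varepsilon$ to be the whole circle, and an intersection of dense opens need not be everything). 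The paper's explicit ball construction gives, at each scale $k$, an open arc of angles of \emph{fixed} width, which is what makes the density argument land cleanly. A minor point: your sketch that $\arg\alpha\in\pi\mathbb{Q}$ would force $\alpha^k\in\mathbb{R}$, hence $\alpha^k\in\mathbb{Q}(\alpha)\cap\mathbb{R}=\mathbb{Q}$ (degree $3$ is prime), hence $\beta^k=\alpha^k$, contradicting $|\alpha|<1<\beta$, is correct for the cubic case and is a nice alternative to the paper's appeal to Mignotte via Lemma~\ref{lem:M}.
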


In order to prove Proposition~\ref{prop:spiral} we will need the following result.

\begin{lemma}\label{lem:M}
Let $\beta$ be a Pisot number with complex Galois conjugates $\beta_j,\overline{\beta_j}$, $r<j\le r+s$, $\beta_j=\rho_j e^{2\pi i\phi_j}$.
Then $1,\phi_{r+1},\ldots,\phi_{r+s}$ are linearly independent over~$\mathbb{Q}$.
\end{lemma}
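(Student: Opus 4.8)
The plan is to argue by contradiction using the fact that $\beta$ is a Pisot number, which strongly constrains the size of the other conjugates. Suppose there is a nontrivial rational relation $c_0 + \sum_{j=r+1}^{r+s} c_j \phi_j \in \mathbb{Z}$ (clearing denominators we may take the $c_j \in \mathbb{Z}$, not all of $c_{r+1},\dots,c_{r+s}$ zero). The idea is to feed this relation into a suitable symmetric function of the conjugates of $\beta$ that must be a rational integer, and derive a contradiction from the Pisot property. Concretely, consider the algebraic number $\gamma := \prod_{j=r+1}^{r+s} \beta_j^{\,c_j}$; its complex argument is $2\pi \sum_j c_j \phi_j \in 2\pi\mathbb{Z} - 2\pi c_0$, so $\gamma$ is a \emph{positive real number}. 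Moreover $\gamma$ lies in the field generated by the conjugates of $\beta$, and applying any element of the Galois group of the splitting field permutes the $\beta_j, \overline{\beta_j}$; the key point is that each Galois conjugate of $\gamma$ is again a product of the $\beta_j^{\pm 1}, \overline{\beta_j}^{\pm 1}$, hence has modulus a product of the $\rho_j^{\pm 1}$. Since $\beta$ is Pisot, every $\rho_j < 1$, so every Galois conjugate of $\gamma$ has modulus strictly less than $1$ \emph{unless some $\beta_j$ appears with a negative exponent}, which is why one has to be a little careful about the sign of the $c_j$.

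To handle the signs cleanly, I would instead work with $\gamma := \prod_{j=r+1}^{r+s} \big(\beta_j \overline{\beta_j}\big)^{|c_j|} \cdot \prod_{j}\beta_j^{\,\epsilon_j}$ — but more simply: the cleanest route is to observe that from $\sum_j c_j \phi_j \in \mathbb{Q}$ we also get, after multiplying by a common denominator $N$, that $\prod_j \beta_j^{\,N c_j}$ has argument in $2\pi\mathbb{Z}$, hence equals a positive real; call it $\delta \in \mathbb{Q}(\beta_{r+1},\dots)$. Now take the norm: $\mathrm{N}(\delta)$, the product of all Galois conjugates of $\delta$ over $\mathbb{Q}$, is a nonzero rational number, in fact (since the $\beta_j$ are algebraic \emph{units}? — no, we only know $\beta$ is Pisot here, not a unit) a rational whose absolute value is a product of powers of $|\mathrm{N}_{\mathbb{Q}(\beta)/\mathbb{Q}}(\beta)|$. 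The real obstruction, and the step I expect to be the crux, is to rule out the possibility that $\delta$ is a rational number (equivalently a root of unity times a rational): if $\delta \in \mathbb{Q}$ then $\prod_j \beta_j^{Nc_j} \in \mathbb{Q}$, and one must show this forces all $c_j = 0$. This is a multiplicative-independence statement about the complex conjugates of a Pisot number.

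For that final independence step I would argue: if $\prod_j \beta_j^{m_j} \in \mathbb{R}$ (indeed $\in \mathbb{Q}$) for integers $m_j$ not all zero, apply complex conjugation (an element of the Galois group) to see that $\prod_j \overline{\beta_j}^{\,m_j}$ equals the same rational, hence $\prod_j (\beta_j/\overline{\beta_j})^{m_j} = 1$, i.e.\ $\prod_j e^{4\pi i m_j \phi_j} = 1$. Then apply a Galois automorphism sending $\beta \mapsto \beta$ (fixing the field $\mathbb{Q}(\beta)$) but permuting the complex pairs nontrivially — available because the $\beta_j$ need not be conjugate over $\mathbb{Q}(\beta)$, so here one genuinely needs the hypothesis structure. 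Honestly, the slickest complete argument uses that the multiplicative group generated by the conjugates of a Pisot number modulo roots of unity is free of rank equal to the number of conjugates outside the unit circle minus corrections — this is exactly the kind of statement proved via Baker-type or just elementary valuation-theoretic reasoning: a relation $\prod \beta_j^{m_j} = $ (rational) combined with the archimedean sizes $\rho_j<1$ and $|\beta|>1$ and with the analogous relation at the conjugate $\beta$ itself forces, by comparing absolute values under all embeddings, that the exponent of $\beta$ is zero and then inductively that all $m_j = 0$. I would write this out by setting up the system of linear equations "$\log$ of each absolute value" and using that the matrix of embeddings applied to a basis of conjugates is invertible (nonvanishing of the relevant Vandermonde/unit-rank determinant), so the only solution is the trivial one, contradicting our assumption that not all $c_j$ vanish. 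The main obstacle is thus purely this multiplicative-independence input; everything else (clearing denominators, reducing to a positive real, taking norms) is routine.
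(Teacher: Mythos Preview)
Your reduction is essentially the paper's: from a nontrivial integer relation $k_0+k_1\phi_{r+1}+\cdots+k_s\phi_{r+s}=0$ you correctly observe that $\beta_{r+1}^{k_1}\cdots\beta_{r+s}^{k_s}$ is real, hence equal to its complex conjugate,
\[
\beta_{r+1}^{k_1}\cdots\beta_{r+s}^{k_s}=\overline{\beta_{r+1}}^{\,k_1}\cdots\overline{\beta_{r+s}}^{\,k_s},
\]
and the whole lemma comes down to showing that this multiplicative relation among conjugates of $\beta$ forces $k_1=\cdots=k_s=0$. The paper does not prove this step from scratch: it simply invokes a theorem of Mignotte~\cite{Mignotte84} on multiplicative relations among the conjugates of an algebraic number, which in the Pisot case gives exactly the required conclusion.

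Your proposal, by contrast, tries to supply an elementary proof of this independence, and that is where there is a genuine gap. The norm and ``$\log$ of absolute values'' approaches you sketch cannot close it as written: the element $u=\prod_j(\beta_j/\overline{\beta_j})^{k_j}$ lies on the unit circle, so $|u|=1$ is automatic, and under both the identity embedding and complex conjugation every factor $\beta_j/\overline{\beta_j}$ still has modulus~$1$; the matrix of $\log|\sigma(\cdot)|$ you allude to therefore has obvious zero rows, and nothing in your outline explains why the remaining rows have trivial kernel. Already in the case $s=1$ your argument must show that $\beta_j/\overline{\beta_j}$ is not a root of unity (equivalently $\phi_j\notin\mathbb{Q}$), and none of your size or valuation considerations address this. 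That statement is precisely the content one needs from Mignotte's theorem; it is true, but it requires an actual argument (Galois-theoretic, combined with a Kronecker-type input), not just archimedean size comparisons. So either cite~\cite{Mignotte84}, as the paper does, or replace your final paragraph with a genuine proof that distinct conjugates of a Pisot number admit no nontrivial multiplicative relation of this form.
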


\begin{proof}
Let $k_0,\ldots,k_s$ be integers such that $k_0+k_1\phi_{r+1}+\cdots+k_s\phi_{r+s}=0$. 
Then, the product $\beta_{r+1}^{k_1}\cdots\beta_{r+s}^{k_s}$ is a real number and thus
$$
\beta_{r+1}^{k_1} \cdots \beta_{r+s}^{k_s} = \overline{\beta_{r+1}}^{k_1} \cdots \overline{\beta_{r+s}}^{k_s}.
$$
It is proved in \cite{Mignotte84} that this implies that $k_1=\cdots=k_s=0$, and the lemma is proved.
\end{proof}

We are now ready to prove Proposition~\ref{prop:spiral}.

\begin{proof}[Proof of Proposition~\ref{prop:spiral}] 
First note that since $\beta$ is a cubic Pisot number, we simply have $\Xi(\beta)=\alpha$.
Let $z\in\mathbb{Q}(\alpha)\cap\mathcal{T}$ and let $\theta$ and $\varepsilon$ be two positive real numbers. 

By Lemma~\ref{lem1}, $z$ has at least one $\alpha$-representation. 
Since $z$ belongs to the central tile~$\mathcal{T}$, the proof of Lemma~\ref{lem1} actually implies the existence of a left infinite 
admissible word $\cdots a_{-1}a_0$ such that $\cdots a_{-1}a_0\biinf 0^{\omega}$ is an $\alpha$-representation of~$z$. 
Furthermore, by Lemma~\ref{lemper}, such a representation is eventually periodic and there thus exists non-negative integers $p$ and $q$ such that $\leftexp{\omega}{(}a_{-q-p+1}\cdots a_{-q})a_{-q+1}\cdots a_0\biinf 0^{\omega}$ is an $\alpha$-representation of~$z$. 
 
For every non-negative integer~$k$, set $z_k :=(a_{-q-p+1}\cdots a_{-q})^k a_{-q+1}\cdots a_0\decdot$.
There exists a positive integer $\ell$ (depending on~$\beta$) such that for every left infinite admissible word $\cdots b_{-1}b_0$ the bi-infinite word  
\begin{equation}\label{equ:jump}
\cdots b_{-1}b_0 0^{\ell}(a_{-q-p+1}\cdots a_{-q})^j a_{-q+1}\cdots a_0 \biinf 0^{\omega}
\end{equation}
is also admissible. 
Roughly, this means that the lexicographic condition cannot ``jump over~$0^{\ell}$''. 
Consequently, we have 
\begin{equation}\label{equ:ball}
z_k+\alpha^{q+k p+\ell}\mathcal{T} \subseteq \mathcal{T}.
\end{equation}
Since $\beta$ is a Pisot unit, we know that $\mathcal{T}$ has a non-empty interior, a result obtained in~\cite{AKI2}.
Thus, $\mathcal{T}$ contains some ball, say $\mathcal{B}$. 
Set $\mathcal{B}':= \alpha^{q+\ell}\mathcal{B}$. 
By~(\ref{equ:ball}), it also contains the balls $z_k+\alpha^{k p}\mathcal{B}'$ for every non-negative integer~$k$.

Note that there exists some non-empty interval $(\eta,\zeta)\subset (0,1)$ and some positive real number $R$ such that every ray $z+[0,R)e^{2\pi i\psi}$ with $\psi\in(\eta,\zeta)$ contains an interior point of $z_0+\mathcal{B}'$. 
Furthermore, $\leftexp{\omega}{(}a_{-q-p+1}\cdots a_{-q})0^{q+k p} \biinf 0^{\omega}$ is an $\alpha$-representation of $z-z_k$ and thus 
$$
\alpha^{k p}\mathcal{B}'+z_k-z = \alpha^{k p}(\mathcal{B}'+z_0-z).
$$
Let $\rho$ and $\phi$ be positive real numbers such that 
$\alpha =\rho e^{2\pi i\phi}$.
Then, every ray 
$$
z+[0,\rho^{k p}R)e^{2\pi i(\psi+k p\phi)}
$$ 
with $\psi\in(\eta,\zeta)$ contains an interior point of $z_k+\alpha^{k p}\mathcal{B}'$.
By Lemma~\ref{lem:M}, $\phi$ is irrational and the sequence 
$(k p\phi \bmod{1})_{k\geq 0}$ is thus dense in $(0,1)$. 
It follows that there are infinitely many positive integers $k_1 < k_2 < \ldots$ and infinitely many real numbers $x_1,x_2,\ldots \in (\eta,\zeta)$ such that $\theta/2\pi = k_h p\phi+ x_h \bmod{1}$ for $h \ge 1$. 
Since $\beta$ is a Pisot number, we have $0<\rho <1$.  
For $\ell$ large enough, we thus obtain that the ray  $z+[0,\varepsilon)e^{i\theta}$ contains an interior point of~$\mathcal{T}$. 

If $z$ belongs to the boundary of~$\mathcal{T}$, from the covering  property (\ref{eq:covering}) it follows that $z$ is also contained in some tile $\mathcal{T}(y)$ with $y\ne 0$.

Then, arguing as previously, we obtain that $z$ has an $\alpha$-representation of the form 
$$
\leftexp{\omega}{(}a_{-q'-p'+1}'\cdots a_{-q'}')a_{-q'+1}'\cdots a_0'\biinf d_\beta(y)
$$
and by similar arguments as above, we can show that the ray $z+[0,\varepsilon)e^{i\theta}$ contains an interior point of the tile~$\mathcal{T}(y)$. 
By Theorem~ARS, such a point lies in the complement of~$\mathcal{T}$. 
This shows that $z$ is a spiral point with respect to~$\mathcal{T}$.

\medskip

We shall now detail why similar arguments apply if we replace $\mathcal{T}$ by a subtile~$\mathcal{T}_j$.
Recall that $d_\beta^*(1)=t_1\cdots t_m(t_{m+1}\cdots t_{m+n})^\omega$
denotes the expansion of~1. 
It follows from the definition of the subtiles $\mathcal{T}_j$ that:
\begin{itemize}
\itemsep3pt
\item[(i)] 
when $0 \leq j < m$,  a point $z$ belongs to $\mathcal{T}_j$ if and only if $z$ has an $\alpha$-representation with  $t_1 \cdots t_j\biinf 0^{\omega}$ as a suffix;
\item[(ii)] 
when $m \leq j < m+n$,  a point $z$ belongs to $\mathcal{T}_j$ if and only if there exists $\ell \geq 0$ such that $z$ has an $\alpha$-representation with  $t_1 \cdots t_m (t_{m+1}\cdots t_{m+n})^\ell t_{m+1} \cdots t_j \biinf 0^{\omega}$ as a suffix.
\end{itemize}

Let us assume that $z \in \mathbb{Q}(\alpha) \cap \partial \mathcal{T}_j$ for some $0 \leq j<m+n$. 
If $z$ also belongs to $\partial T$, we fall into the previous case. We can thus assume that this is not the case, hence $z$ belongs to the boundary of another tile $\mathcal{T}_h$, $h\neq j$.
By Proposition~BS, it remains to prove that the ray  $z+[0,\varepsilon)e^{i\theta}$ contains an interior point of both $\mathcal{T}_j$ and $\mathcal{T}_h$. 

Let us briefly justify this claim. 
Coming back to the proof above, we deduce from (i) and (ii) that $z_k$ belongs to $\mathcal{T}_j$ for $k$ large enough. 
Moreover, there exists a positive integer~$\ell$ (depending on~$\beta$) such that for every left infinite admissible word $\cdots b_{-1}b_0$ the bi-infinite word 
$$
\cdots b_{-1}b_0 0^{\ell}(a_{-q-p+1}\cdots a_{-q})^k a_{-q+1}\cdots a_0 \biinf 0^{\omega}
$$
satisfies the admissibility condition for ${\mathcal{T}}_j$ given in~(i)/(ii). 

This allows to replace (\ref{equ:ball}) by 
$$
z_k+\alpha^{q+k p+\ell}\mathcal{T} \subseteq \mathcal{T}_j
$$
and to conclude as previously.
\end{proof}

\section{Proof of Theorems~\ref{th1} and~\ref{th2}}

In this section, we complete the proof of Theorems~\ref{th1} and~\ref{th2}.  
In this section, $\beta$ denotes a cubic Pisot unit. 

\begin{lemma}\label{lem:Ti1}
For every $i\geq 1$, either $T_{\beta}^i(1)=0$ or $T_{\beta}^i(1)\not\in\mathbb{Q}$.
\end{lemma}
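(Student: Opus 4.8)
The plan is to analyze the quantity $T_\beta^i(1)$ using the fact that $\beta$ is a cubic Pisot unit together with the admissibility/Parry structure, and to exploit Galois conjugation in the spirit of the argument in Proposition~\ref{justify}. First I would recall that $d_\beta^*(1)=t_1\cdots t_m(t_{m+1}\cdots t_{m+n})^\omega$ is eventually periodic, so $T_\beta^i(1)$ lies in $\mathbb{Q}(\beta)\cap[0,1)$ for every $i$; moreover for $i\ge m$ the orbit $\bigl(T_\beta^i(1)\bigr)$ is purely periodic with period $n$. Thus there are only finitely many values $T_\beta^i(1)$, $i\ge1$, namely $T_\beta(1),\dots,T_\beta^{m+n-1}(1)$ (and $T_\beta^{m+n}(1)=T_\beta^m(1)$). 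If $d_\beta(1)$ is infinite, each $T_\beta^i(1)$ is a nonzero element of $(0,1)$; if $d_\beta(1)$ is finite of length $N$, then $T_\beta^i(1)=0$ for $i\ge N$ and is a nonzero element of $(0,1)$ for $1\le i<N$. So the statement amounts to: whenever $T_\beta^i(1)\ne0$, it is irrational.

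The key step will be a Galois-conjugation argument. Suppose $T_\beta^i(1)=p/q\in\mathbb{Q}$ with $0<p/q<1$. Writing $d_\beta^*(1)=t_1t_2\cdots$, we have $T_\beta^i(1)=\sum_{k\ge1}t_{i+k}\beta^{-k}$, i.e. $\beta^i$ times a tail identity gives a polynomial relation expressing $p/q$ in terms of $\beta$: concretely $q(t_{i+1}\beta^{-1}+t_{i+2}\beta^{-2}+\cdots)=p$, and since the digit string beyond position $i$ is eventually periodic this collapses (as in~(\ref{eq:1})) to an identity of the form $q\,\dfrac{c_1\beta^{p'-1}+\cdots+c_{p'}}{\beta^{p'}-1}+(\text{finite part})=p$ with nonnegative integer coefficients $c_j<\lceil\beta\rceil$ and $\beta^{p'}-1>0$. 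Apply a real or complex embedding $\sigma$ sending $\beta$ to a conjugate $\beta'$ of modulus $<1$ (which exists since $\beta$ is Pisot of degree $3>1$). Since $p/q\in\mathbb{Q}$ is fixed by $\sigma$, we obtain the same rational number $p/q$ equal to the image of the right-hand side under $\sigma$. The hard part will be controlling the sign/size of this conjugate expression: one must show that the conjugated series $\sum_{k\ge1}t_{i+k}(\beta')^{-k}$ diverges or has modulus incompatible with equaling $p/q\in(0,1)$. Here I would use that $|\beta'|<1$ so the geometric-type series with the conjugated common ratio $(\beta')^{-p'}$, whose modulus exceeds $1$, cannot converge — meaning the closed-form rational expression, when conjugated, has denominator $(\beta')^{p'}-1$ with $|(\beta')^{p'}-1|$ small but the numerator need not vanish, so the conjugate value has modulus far from $(0,1)$; comparing with $p/q$ forces a contradiction. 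An alternative, cleaner route: $T_\beta^i(1)$ rational and nonzero would (via $\beta^i$ and the periodicity) force $\beta$ to satisfy a relation making $1/q$ or some small rational have a purely periodic $\beta$-expansion with a conjugate constraint analogous to the Salem case in Proposition~\ref{justify}, yielding a non-positivity contradiction on the Galois side.

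Concretely, the main obstacle is making the conjugate-sign argument airtight across all cases $-1\le b\le a+1$ in the characterization of cubic Pisot units with~(F); I would handle the complex-conjugate case and the totally-real case separately, in the first using $|\beta'|<1$ to bound $|\sigma(T_\beta^i(1))|$ and in the second using that the conjugate of a sum of nonnegative digits divided by $(\beta')^{p'}-1$ has a determined sign. In all cases the contradiction is that a fixed rational $p/q\in(0,1)$ would have to equal a conjugate quantity whose modulus (or whose value) provably lies outside $(0,1)$. I expect the proof in the paper to be short, invoking Lemma~\ref{lemC} or the eventual periodicity of the $\beta$-orbit of $1$ together with the standard Pisot-conjugation trick already displayed in the proof of Proposition~\ref{justify}; my plan mirrors that, with the delicate point being the estimate on the conjugated tail series.
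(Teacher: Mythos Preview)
Your approach has a genuine gap: the Galois-conjugation argument you sketch never reaches a contradiction. If $T_\beta^i(1)=p/q\in\mathbb{Q}$, then applying any embedding $\sigma$ to both sides gives $\sigma(T_\beta^i(1))=p/q$ again, since rationals are fixed. You propose to compute $\sigma(T_\beta^i(1))$ via the conjugated closed form $\dfrac{c_1(\beta')^{p'-1}+\cdots+c_{p'}}{(\beta')^{p'}-1}$ and argue that its modulus (or sign) is incompatible with lying in $(0,1)$. But this expression \emph{is} the conjugate of $T_\beta^i(1)$, so it equals $p/q$ exactly; there is nothing to contradict. The divergence of the conjugated power series is irrelevant once you have passed to the rational closed form, and there is no a priori size or sign estimate on $\dfrac{c_1(\beta')^{p'-1}+\cdots+c_{p'}}{(\beta')^{p'}-1}$ that forces it out of $(0,1)$. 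The analogy with the Salem case in Proposition~\ref{justify} does not carry over: there one had a \emph{real} conjugate $1/\beta>0$ and a numerator with \emph{nonnegative} coefficients, which genuinely pinned down a sign.

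The paper's proof is a single line and uses a fact you overlooked: $T_\beta^i(1)\in\mathbb{Z}[\beta]\cap[0,1)$. Since $\beta$ is a cubic algebraic integer, $\mathbb{Z}[\beta]=\mathbb{Z}+\mathbb{Z}\beta+\mathbb{Z}\beta^2$, and the $\mathbb{Q}$-linear independence of $1,\beta,\beta^2$ forces $\mathbb{Z}[\beta]\cap\mathbb{Q}=\mathbb{Z}$. Hence if $T_\beta^i(1)$ is rational it is a rational integer in $[0,1)$, so it is~$0$. Equivalently: a rational number that is an algebraic integer must be a rational integer. No conjugation, no series estimates, no case analysis on the minimal polynomial are needed.
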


\begin{proof}
This follows from $T_\beta^i(1)\in\mathbb{Z}[\beta]\cap[0,1)$.
\end{proof}

\begin{lemma}\label{lem:minus1}
If $\beta$ satisfies {\rm (F)}, then $\Xi(-1)$ lies on the boundary of~$\mathcal{T}$. 
\end{lemma}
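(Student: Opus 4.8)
The plan is to exhibit an explicit $\alpha$-representation of $\Xi(-1)$ whose integer part is $-1$ and whose fractional part is admissible, so that $\Xi(-1)$ lies in $\mathcal{T}$, and then to show that $\Xi(-1)$ cannot be an interior point. First I would use property (F): since $\beta$ is a Pisot unit, $-1\in\mathbb{Z}[\beta]$, but more to the point we want to write $-1$ in the form ``$\beta$-integral part with value related to $\Xi(-1)$''. The natural identity to exploit is that the word $t_1 t_2\cdots$ coming from $d_\beta^*(1)$ gives $1=\sum_{i\ge1}t_i\beta^{-i}$, hence $\beta = t_1 + t_2\beta^{-1}+\cdots$, and iterating one obtains that $-1$ (equivalently the ``digit string'' representing $-1$) can be expressed through the eventually periodic word $d_\beta^*(1)$. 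Concretely, I would look for a left-infinite admissible word $\cdots a_{-1}a_0$ with $\sum_{j\ge0}a_{-j}\Xi(\beta^j)=\Xi(-1)$, i.e. $\sum_{j\ge0}a_{-j}\beta^j = -1$ in the sense of the Galois conjugate (which, since $-1$ is fixed by all embeddings, forces $\sum_{j\ge0}a_{-j}\beta^j$ to equal $-1$ as a formal relation among conjugates — this is exactly what makes $\cdots a_{-1}a_0\biinf 0^\omega$ an $\alpha$-representation of $\Xi(-1)$). The eventually periodic tail here is precisely $\leftexp{\omega}{(t_{m+1}\cdots t_{m+n})}$ read backwards appropriately; this shows $\Xi(-1)\in\mathcal{T}$.

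Next I would show $\Xi(-1)\notin\mathring{\mathcal{T}}$. The cleanest route is via Theorem~IR and Corollary~\ref{cor:boundary}, or more directly via the covering relation~(\ref{eq:covering}): I want to produce a tile $\mathcal{T}(y)$ with $y\ne0$ that also contains $\Xi(-1)$, because then by Theorem~ARS the interiors are disjoint, and since $\mathcal{T}(y)=\Xi(y)+\mathcal{T}$ with $\Xi(-1)$ lying in both, $\Xi(-1)$ must sit on $\partial\mathcal{T}$. To build such a $y$: the string representing $-1$ obtained above, when we ``shift the radix point'', should be re-parsed as $\Xi(y)+\Xi(\text{admissible fractional part})$ with $y$ equal to the relevant element of $\mathbb{Z}[\beta]\cap[0,1)$. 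Here property (F) enters crucially: it guarantees that the candidate $y\in\mathbb{Z}[1/\beta]\cap[0,1)=\mathbb{Z}[\beta]\cap[0,1)$ has a \emph{finite} $\beta$-expansion, which is what lets us append the left-infinite admissible word in front of $d_\beta(y)$ while keeping admissibility — exactly the ``jump over $0^\ell$'' mechanism from the proof of Proposition~\ref{prop:spiral}.

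I expect the main obstacle to be the bookkeeping that identifies the correct digit string for $-1$ and the correct companion value $y$, and checking admissibility of the concatenation. The identity one needs is essentially $\beta^{m+n}-\beta^m = t_{m+1}\beta^{n-1}+\cdots+t_{m+n}$ (the characteristic-type relation behind the periodic tail of $d_\beta^*(1)$), from which a finite combination $\sum_{j} a_{-j}\beta^j = -1$ with digits in $\mathcal{A}_\beta$ can be extracted after possibly passing to the eventually periodic regime $i\ge m$; one must verify that the resulting left-infinite word and its shift are admissible (all suffixes lexicographically $<d_\beta^*(1)$), which is where the hypothesis (F) and the structure of $d_\beta^*(1)$ do the real work. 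Once $\Xi(-1)$ is shown to belong to two distinct tiles, Theorem~ARS immediately forces it onto the common boundary, completing the proof.
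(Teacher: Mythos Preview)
Your overall strategy---show $\Xi(-1)\in\mathcal{T}$ and $\Xi(-1)\in\mathcal{T}(y)$ for some $y\ne0$, then invoke Theorem~ARS---is exactly the paper's. What is missing from your plan is the one concrete observation that makes the argument a few lines long: property~(F) forces $d_\beta(1)$ itself to be \emph{finite}. Indeed $T_\beta(1)\in\mathbb{Z}[\beta]\cap[0,1)$, so by~(F) its expansion terminates; hence $d_\beta(1)=t_1\cdots t_n0^\omega$ with $t_n>0$ and $d_\beta^*(1)=(t_1\cdots t_{n-1}(t_n-1))^\omega$. With this in hand, for each $1\le j\le n$ with $t_j>0$ the bi-infinite word
\[
\leftexp{\omega}{(t_1\cdots t_{n-1}(t_n-1))}\,t_1\cdots t_{j-1}(t_j-1)\biinf t_{j+1}\cdots t_n 0^\omega
\]
is admissible and is a $\Xi(\beta)$-representation of $\Xi(-1)$, since $\Xi(\beta^{j+kn})\to0$. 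Taking $j=n$ (allowed because $t_n>0$) the fractional part is $0^\omega$, so $\Xi(-1)\in\mathcal{T}$; taking $j=1$ (allowed because $t_1>0$) gives $\Xi(-1)\in\mathcal{T}(y)$ with $y=\decdot t_2\cdots t_n\ne0$. Theorem~ARS then places $\Xi(-1)$ on $\partial\mathcal{T}$.

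Your route through the general eventually periodic form $t_1\cdots t_m(t_{m+1}\cdots t_{m+n})^\omega$, the relation $\beta^{m+n}-\beta^m=\cdots$, and the ``jump over $0^\ell$'' device from Proposition~\ref{prop:spiral} is unnecessary and, as written, does not actually produce a concrete second tile: you never name $y$, and the role you assign to~(F) (``the candidate $y$ has finite expansion'') is one step removed from where it is really used. The decisive use of~(F) is upstream---it collapses $d_\beta^*(1)$ to a purely periodic word of period~$n$, and that is what hands you both explicit representations simultaneously by decrementing a single digit $t_j$.
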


\begin{proof}
If $\beta$ satisfies~(F), then we have $d_\beta(1)=t_1\cdots t_n 0^{\omega}$, with $t_n>0$.
If $1\le j\le n$ is an integer such that $t_j>0$, then 
$$
\leftexp{\omega}{(}t_1\cdots t_{n-1}(t_n-1))t_1\cdots t_{j-1}(t_j-1)\biinf t_{j+1}\cdots t_n 0^{\omega}
$$
is a $\Xi(\beta)$-expansion of $-1$ since this sequence is admissible and 
$$
\lim_{k\to\infty} \Xi\big((t_1\cdots t_{n-1}(t_n-1))^k t_1\cdots t_j\decdot t_{j+1}\cdots t_n\big) = \lim_{k\to\infty} \Xi(\beta^{j+k n}) = 0.
$$ 
Since by definition $t_n>0$, $\Xi(-1)$ belongs to the central tile~$\mathcal{T}$.
Since $t_1>0$, $\Xi(-1)$ also lies in the tile $\mathcal{T}(y)$ where $y := \decdot t_2\cdots t_n\neq 0$.  
By Theorem~ARS, we obtain that $\Xi(-1)$ lies on the boundary of the tile~$\mathcal{T}$, as claimed.
\end{proof}

We are now ready to prove Theorem~\ref{th1}. 

\begin{proof}[Proof of Theorem~\ref{th1}] 
As previously mentioned, if $\beta$ satisfies~(F),  
we infer from \cite{AKI1} that $\gamma(\beta)>0$. 

If $\beta$ does not satisfy~(F), then Akiyama~\cite{Akiyama00} proved that the minimal polynomial $p(x)$ of $\beta$ satisfies 
either $p(0)=1$ or $p(x)=x^3-a x^2-bx-1$ with $-a+1\le b\le-2$. 
Hence, $\beta$ has either a positive real Galois conjugate or two complex Galois conjugates. 
In the first case, it is easy to see that $\gamma(\beta)=0$, as observed in~\cite{AKI1}. 
In the latter case, it is known that the origin belongs to the boundary of the central tile~$\mathcal{T}$. 
By Proposition~\ref{prop:spiral}, we get that $0$ is a spiral point with respect to~$\mathcal{T}$. 
Hence, there are rational numbers arbitrarily close to $0$ whose $\beta$-expansion is not purely periodic (as well as intervals where all rational numbers have purely periodic $\beta$-expansion). 
Consequently, we also have $\gamma(\beta)=0$ in that case, concluding the proof.
\end{proof}

\begin{proof}[Proof of Theorem~\ref{th2}] 
Let $\beta$ be a cubic Pisot unit satisfying~(F). 
Let us assume that $\mathbb{Q}(\beta)$ is not a totally real number field.    
Let us assume that $\gamma(\beta)$ is a rational number and we aim at deriving a contradiction. 
Note that by assumption we have $\Xi(-\gamma(\beta))=-\gamma(\beta)$. 

We first observe that if $\gamma(\beta)=T_{\beta}^i(1)$ for some non-negative integer~$i$, then $-\gamma(\beta)$ belongs to the boundary of the central tile~$\mathcal{T}$.  
Indeed, in that case, Lemma~\ref{lem:Ti1} implies that $\gamma(\beta)=T^0(1)=1$ and the result follows from Lemma~\ref{lem:minus1}. 

Let us assume that $-\gamma(\beta)$ belongs to the boundary of the central tile~$\mathcal{T}$. 
By Proposition~\ref{prop:spiral}, $-\gamma(\beta)$ is a spiral point 
with respect to~$\mathcal{T}$. 
Thus there exists a rational number $0<r<\gamma(\beta)$ such that $-r$ lies in the interior of a tile $\mathcal{T}(y)$ for some $y\neq 0$. 
By Theorem~ARS, $-r = \Xi(-r)$ does not belong to $\mathcal{T}$ 
and thus 
$$
(-r,r)\not\in\bigcup_{i=0}^{m+n-1}\mathcal{T}_i\times[0,T_{\beta}^i(1)).
$$ 
Theorem~IR then implies that $d_{\beta}(r)$ is not purely periodic, which contradicts the definition of $\gamma(\beta)$ since $0< r<\gamma(\beta)$.

Let us assume now that $-\gamma(\beta)$ does not belong to the boundary of the central tile~$\mathcal{T}$. 
By Corollary~\ref{cor:boundary} and our first observation, 
this ensures the existence of two integers $0\leq i\neq j \leq m+ n-1$ such that $-\gamma(\beta)\in\mathcal{T}_i\cap\mathcal{T}_j$ and $T_{\beta}^j(1)<\gamma(\beta)<T_{\beta}^i(1)$. 
By Proposition~BS, we get that $-\gamma(\beta)\in\partial \mathcal{T}_j$. 
We then infer from Proposition~\ref{prop:spiral} that $-\gamma(\beta)$ is a spiral point with respect to~$\mathcal{T}_j$. The interior of $\mathcal{T}_j$ thus contains a rational number $-r$ such that $T_{\beta}^j(1)<r<\gamma(\beta)$. 
By Proposition~BS, $-r=\Xi(-r)$ does not belong to any other subtile $\mathcal{T}_i$ and thus 
$$
(-r,r)\not\in\bigcup_{i=0}^{m+n-1}\mathcal{T}_i\times[0,T_{\beta}^i(1)).
$$ 
By Theorem~IR, the $\beta$-expansion of $r$ is not purely periodic which yields a contradiction with the fact that $0< r < \gamma(\beta)$. 
This concludes the proof. 
\end{proof}


\bibliographystyle{plain}
\bibliography{Biblio}
\end{document}